\newtheorem{theorem}{Theorem}[section]
\newtheorem{proposition}[theorem]{Proposition}
\newtheorem{lemma}[theorem]{Lemma}
\newtheorem{corollary}[theorem]{Corollary}
\theoremstyle{definition}
\newtheorem{definition}[theorem]{Definition}
\newtheorem{remark}[theorem]{Remark}
\newtheorem{problem}[theorem]{Problem}
\newtheorem{example}[theorem]{Example}
\numberwithin{equation}{section}
\numberwithin{figure}{section}
\newcommand\Ascr{\mathscr{A}}
\newcommand\Cscr{\mathscr{C}}
\newcommand\C{\mathbb{C}}
\newcommand\D{\overline{\mathbb D}}
\renewcommand\D{\mathbb D}
\renewcommand\H{\mathbb{H}}
\newcommand\N{\mathbb{N}}
\newcommand\R{\mathbb{R}}
\newcommand\Z{\mathbb{Z}}
\newcommand\igot{\mathfrak{i}}
\renewcommand\igot{\mathfrak{i}}
\renewcommand\imath{\igot}
\newcommand\wt{\widetilde}
\newcommand\di{\partial}
\newcommand\Hess{\mathrm{Hess}}
\newcommand\nullq{{\mathbf A}}
\numberwithin{equation}{section}
\begin{document}
\title{Flexible domains for minimal surfaces in Euclidean spaces}
\author{Barbara Drinovec Drnov\v sek and Franc Forstneri{\v c}}

\address{Barbara Drinovec Drnov\v sek, Faculty of Mathematics and Physics, University of Ljubljana, and Institute of Mathematics, Physics, and Mechanics, Jadranska 19, 1000 Ljubljana, Slovenia}
\email{barbara.drinovec@fmf.uni-lj.si}

\address{Franc Forstneri\v c, Faculty of Mathematics and Physics, University of Ljubljana, and Institute of Mathematics, Physics, and Mechanics, Jadranska 19, 1000 Ljubljana, Slovenia}
\email{franc.forstneric@fmf.uni-lj.si}

\thanks{Research is supported by grants P1-0291, J1-3005, N1-0137, and N1-0237 from ARRS, 
Republic of Slovenia.}


\subjclass[2010]{Primary 53A10. Secondary 32Q45, 32Q56}
%
%
%
%
%
%

\date{1 May 2022. This version: 1 September 2022}

\keywords{minimal surface, flexible domain, hyperbolic domain, Oka manifold}

\begin{abstract}
In this paper we introduce and investigate a new notion of flexibility for domains in Euclidean spaces $\R^n$ for $n\ge 3$ 
in terms of minimal surfaces which they contain. A domain $\Omega$ in $\mathbb R^n$ is said to be flexible if every conformal minimal immersion 
$U\to\Omega$ from a Runge domain $U$ in an open conformal surface $M$ can be approximated uniformly on compacts,
with interpolation on any given finite set, by conformal minimal immersion $M\to \Omega$. 
Together with hyperbolicity phenomena considered in recent works, this extends the dichotomy between 
flexibility and rigidity from complex analysis to minimal surface theory.
\end{abstract}

\maketitle

\section{Introduction}\label{sec:intro} 

A natural question in the theory of minimal surfaces in Euclidean spaces $\R^n$ for $n\ge 3$ 
is how the geometry of a domain $\Omega\subset \R^n$ influences the conformal properties 
of minimal surfaces which it contains; see the survey \cite{MeeksPerez2004SDG}. 
While every domain contains many conformal minimal surfaces parameterized by the 
disc $\D=\{z\in\C:|z|<1\}$, any bounded domain and many unbounded domains
do not admit any such surfaces parameterized by $\C$.

A complex manifold which does not admit any nonconstant
holomorphic maps from $\C$ is called {\em Brody hyperbolic} \cite{Brody1978}. The closely related 
{\em Kobayashi hyperbolicity} was introduced by S.\ Kobayashi \cite{Kobayashi1967} in 1967.
Analogous notions have recently been studied for minimal surfaces in $\R^n$; 
see \cite{DrinovecForstneric2021X,ForstnericKalaj2021,Forstneric2022X}. 
Every domain $\Omega$ in $\R^n$ for $n\ge 3$ 
carries a Finsler pseudometric, $g_\Omega$, defined like the Kobayashi pseudometric but 
using conformal minimal (equivalently, conformal harmonic) discs; see \cite{ForstnericKalaj2021}. 
This {\em minimal metric} is the largest pseudometric on the tangent bundle $T\Omega=\Omega\times\R^n$ 
such that every conformal harmonic map $\D\to\Omega$ 
is metric-decreasing when $\D$ is endowed with the Poincar\'e metric. 
The domain $\Omega$ is said to be {\em hyperbolic} if $g_\Omega$ 
induces a distance function $d_\Omega$ on $\Omega$, and {\em complete hyperbolic} if $(\Omega,d_\Omega)$ is a 
complete metric space. We refer to \cite{DrinovecForstneric2021X} for basic properties and results on hyperbolic 
domains. 

In this paper we study domains having the opposite property, which we now describe.

Recall that a {\em conformal surface} is a topological surface with an atlas whose transition maps
are conformal diffeomorphisms between plane domains, hence holomorphic or antiholomorphic. 
Every topological surface admits a conformal structure \cite[Sect.\ 1.8]{AlarconForstnericLopez2021}.
An orientable conformal surface is a Riemann surface, and a nonorientable 
conformal surface carries a two-sheeted conformal covering $\wt M\to M$ by a Riemann surface $\wt M$. 
A compact set $K$ in a conformal surface $M$ is said to be {\em Runge} if $M\setminus K$ has no relatively 
compact connected components. A $\Cscr^2$ map $f:M\to\R^n$ which is conformal except at branch points
parameterizes a minimal surface in $\R^n$ if and only if $f$ is harmonic; 
\cite[Theorem 2.3.1 and Remark 2.3.7]{AlarconForstnericLopez2021}. 

%
%

\begin{definition}\label{def:flexible}
A connected open domain $\Omega$ in $\R^n$ for $n\ge 3$ is {\em flexible} (for immersed minimal surfaces) if 
for any open conformal surface $M$, compact Runge set $K\subset M$, finite set $A\subset K$, 
and conformal harmonic immersion $f:U\to\Omega$ from an open neighbourhood $U$ of $K$ there is for every
$\epsilon>0$ and $m\in\N$ a conformal harmonic immersion $\tilde f:M\to \Omega$ which agrees with $f$ 
to order $m$ at every point of $A$ and satisfies $\sup_{p\in K}|\tilde f(p)-f(p)|<\epsilon$.
An open set $\Omega\subset \R^n$ is flexible if every connected component of $\Omega$ is such.
\end{definition}

Similarly we introduce the notion of flexibility for branched minimal surfaces; see Remark \ref{rem:branched}.
Flexibility is inspired by the notion of an {\em Oka manifold}; see Remark \ref{rem:flex2}.
Note that if $\Omega_1\subset \Omega_2\subset \cdots$ is an increasing sequence of 
flexible domains in $\R^n$ then their union $\Omega=\bigcup_{i=1}^\infty \Omega_i$ is clearly flexible as well.
It is known that the Euclidean space 
$\R^n$ is flexible and the resulting conformal minimal immersions $M\to\R^n$ may 
be chosen proper; see \cite[Theorem 3.10.3]{AlarconForstnericLopez2021} for the orientable case and 
\cite[Theorem 4.4]{AlarconForstnericLopezMAMS} for the nonorientable one. The proofs of these results 
show that flexibility implies the following ostensibly stronger property with jet interpolation on closed discrete sets
in a conformal surface. 
 
\begin{proposition}\label{prop:flex}
Assume that $\Omega\subset\R^n$ is a flexible domain.
Given an open set $U$ in an open conformal surface $M$, a compact set $K\subset U$ which is Runge in $M$,
a closed discrete set  $A=\{a_k\}_{k\in\N}$ in $M$ contained in $U$, and a conformal harmonic immersion 
$f:U\to\Omega$, there is for every $\epsilon>0$ and $m_k\in\N$ $(k\in\N)$ a 
conformal minimal immersion $\tilde f:M\to \Omega$ which agrees with $f$ to order $m_k$ at $a_k\in A$ 
for every $k\in\N$ and satisfies $\sup_{p\in K}|f(p)-\tilde f(p)|<\epsilon$.
\end{proposition}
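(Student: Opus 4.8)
The plan is to bootstrap from the finite-interpolation flexibility in Definition~\ref{def:flexible} by a standard exhaustion-and-induction argument, producing $\tilde f$ as a locally uniform limit of a sequence of conformal harmonic immersions into $\Omega$ that stabilizes the jets at finitely many points of $A$ at each stage. First I would fix a normal exhaustion $K\subset K_1\Subset K_2\Subset\cdots$ of $M$ by compact Runge sets with $K_1\supset K$, chosen so that each $K_j$ is $\Oscr(M)$-convex (hence Runge in $M$), and so that the closed discrete set $A$ meets the ``shell'' $K_{j+1}\setminus K_j$ in at most finitely many points; enumerate $A$ so that $a_1,\dots,a_{n_j}$ are exactly the points lying in $K_j$. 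I may also assume, shrinking $U$, that $\overline U\supset K_1$ and that $f$ is a conformal harmonic immersion on a neighbourhood of $\overline{K_1}$.

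Next I would run the induction. Set $f_0=f$ on a neighbourhood of $K_1$. Given a conformal harmonic immersion $f_j$ into $\Omega$ defined on a neighbourhood of $K_j$ and agreeing with $f$ to order $m_k$ at $a_k$ for $k\le n_j$, apply the flexibility hypothesis (Definition~\ref{def:flexible}) with the compact Runge set $K_{j+1}$, the finite set $A_{j+1}=\{a_1,\dots,a_{n_{j+1}}\}\subset K_{j+1}$, the map $f_j$ on its neighbourhood, the prescribed jet orders (take $\max(m_k,j)$ at $a_k$ for $k\le n_j$ to also force $\Cscr^0$-closeness of derivatives where needed, and $m_k$ at the newly added points), and an error bound $\epsilon_{j+1}>0$ to be specified. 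This yields a conformal harmonic immersion $g_{j+1}:M\to\Omega$ with $\sup_{K_j}|g_{j+1}-f_j|<\epsilon_{j+1}$ and with the correct jets at $a_1,\dots,a_{n_{j+1}}$; restrict it to a neighbourhood of $K_{j+1}$ to get $f_{j+1}$. Choosing $\sum_j\epsilon_j<\epsilon/2$ small enough (and in particular $\epsilon_{j+1}$ small compared to the distance from $f_j(K_j)$ to $\partial\Omega$ and to previously fixed tolerances) guarantees that $\{f_j\}$ converges uniformly on compacts of $M$ to a map $\tilde f:M\to\R^n$, that $\tilde f(M)\subset\Omega$ (each point stays in $\Omega$ because the tail corrections are summably small relative to its distance to $\partial\Omega$), and that $\sup_K|\tilde f-f|<\epsilon$.

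The main obstacle — and the only delicate point — is ensuring that the limit $\tilde f$ is again a \emph{conformal harmonic immersion}, not merely a harmonic map: being conformal is a closed condition under $\Cscr^1$-convergence, but being an immersion is open and can be destroyed in a limit. This is handled exactly as in the proofs of \cite[Theorem 3.10.3]{AlarconForstnericLopez2021} and \cite[Theorem 4.4]{AlarconForstnericLopezMAMS}: on each fixed compact $K_j$, once $f_j$ is an immersion one shrinks $\epsilon_{j+1},\epsilon_{j+2},\dots$ so small (in the $\Cscr^1$ norm on $K_j$, which one controls by combining interior elliptic estimates for harmonic maps with the $\Cscr^0$ bounds, or directly by asking the flexibility statement to approximate in a slightly larger set) that all subsequent maps, and hence $\tilde f$, remain immersions on $K_j$; since the $K_j$ exhaust $M$, $\tilde f$ is an immersion on all of $M$. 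Finally, since each $f_j$ for $j\ge k$ has the prescribed $m_k$-jet at $a_k$ fixed once and for all (the flexibility step at stage $j+1$ with $n_{j+1}\ge k$ reproduces it to order $\ge m_k$, and later stages keep it by choosing the jet orders nondecreasing), the limit $\tilde f$ agrees with $f$ to order $m_k$ at $a_k$ for every $k$, completing the proof.
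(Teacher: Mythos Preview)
Your approach is correct and is essentially the one the paper has in mind: the paper does not prove Proposition~\ref{prop:flex} directly but remarks (just before the statement, and again at the end of the proof of Theorem~\ref{th:main1}) that it follows from the exhaustion-and-induction scheme of \cite[Theorem 3.10.3]{AlarconForstnericLopez2021}, which is exactly the argument you outline.

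There is, however, a technical slip in your induction step. You say you apply Definition~\ref{def:flexible} with the compact Runge set $K_{j+1}$, the finite set $A_{j+1}\subset K_{j+1}$, and the map $f_j$; but $f_j$ is only defined on a neighbourhood of $K_j$, not of $K_{j+1}$, so the hypothesis of Definition~\ref{def:flexible} (which requires the immersion to be defined on a neighbourhood of the chosen Runge set, and the interpolation set to lie inside it) is not met. The standard remedy is to take as the compact Runge set not $K_{j+1}$ but rather $K_j$ together with small pairwise disjoint closed discs $D_k\subset U\setminus K_j$ centred at the new points $a_k$ for $n_j<k\le n_{j+1}$; this union is still Runge in $M$, and on a neighbourhood of it you have a conformal harmonic immersion into $\Omega$ equal to $f_j$ near $K_j$ and equal to the original $f$ near each $D_k$. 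Flexibility then yields $g_{j+1}:M\to\Omega$ close to $f_j$ on $K_j$ and with the prescribed jets at all of $A_{j+1}$. With this adjustment (and dropping the phrase ``shrinking $U$'', which would risk losing points of $A$), your argument goes through as written.
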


On the other hand, no hyperbolic domain is flexible. Furthermore, the halfspace 
$
	\H^n =\{(x_1,x_2,\ldots,x_n)\in\R^n: x_n>0\}
$
is not flexible since every harmonic map $\C\to\H^n$ has constant last component 
by Liouville's theorem. The only properly immersed minimal surfaces
in $\R^3$ contained in a halfspace are flat planes (see Hoffman and Meeks \cite{HoffmanMeeks1990IM}).

Our first result gives a geometric sufficient condition for flexibility; it is proved in Sect.\ \ref{sec:proof1}. 

%
%
\begin{theorem}\label{th:main1}
Let $\Omega$ be a connected domain in $\R^n$ $(n\ge 3)$ satisfying the following conditions:
\begin{enumerate}[\rm (a)]
\item For every point $p\in\Omega$ there is an affine 2-plane $\Lambda\subset \R^n$ with $p\in \Lambda$ and a 
number $\delta>0$ such that the Euclidean $\delta$-tube around $\Lambda$ is contained in $\Omega$, and 
\item for some $\Lambda$ as above, given a ball $B\subset \R^n$ 
centred at $0$ there is a point $q\in \Lambda$ such that $q+B\subset \Omega$.
\end{enumerate}
Then $\Omega$ is flexible. Furthermore, if $K$ is a compact Runge set with piecewise $\Cscr^1$ boundary
in an open conformal surface $M$ and $f:K\to\R^n$ is a conformal minimal immersion of class $\Cscr^1(K)$
with $f(bK)\subset \Omega$, then $f$ can be approximated in $\Cscr^1(K)$ by conformal minimal immersions 
$\tilde f:M\to\R^n$ satisfying $\tilde f(M\setminus K)\subset \Omega$.
\end{theorem}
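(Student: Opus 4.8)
The plan is to build the extended immersion $\tilde f\colon M\to\R^n$ by a countable sequence of approximate extensions, each of which pushes the domain of definition across one more handle or bump of an exhaustion of $M$, all the while keeping the image of the newly added region inside $\Omega$. First I would fix a normal exhaustion $K=K_0\subset K_1\subset K_2\subset\cdots$ of $M$ by compact, smoothly bounded Runge sets such that each $K_{j+1}$ is obtained from $K_j$ either by attaching a single convex bump in a flow chart (a "noncritical" case) or by attaching a single $1$-handle (a "critical" case), as in the standard Mergelyan/Morse-theoretic scheme for open conformal surfaces used in \cite{AlarconForstnericLopez2021}. The goal at stage $j$ is to produce a conformal minimal immersion $f_j$ on a neighbourhood of $K_j$ with $f_j(\overline{K_j\setminus K_0})\subset\Omega$, $f_j$ close to $f_{j-1}$ on a neighbourhood of $K_{j-1}$, and agreeing to high order at the finitely many interpolation points of $A$ lying in $K_j$. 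Passing to the limit with geometrically decreasing errors then gives $\tilde f$ on $M$ with $\tilde f(M\setminus K)\subset\Omega$, since the hypothesis $f(bK)\subset\Omega$ together with continuity confines the image of a thin collar outside $K$ to $\Omega$ as well.

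The engine of each inductive step is a local extension lemma of the following type: if $g$ is a conformal minimal immersion defined near a compact set $L$ with $g(bL)\subset\Omega$, and $L'\supset L$ is a small enlargement (a bump or a handle), then $g$ can be approximated, with jet interpolation at prescribed points, by a conformal minimal immersion $g'$ near $L'$ with $g'(\overline{L'\setminus L})\subset\Omega$. In the noncritical (bump) case this is essentially a Mergelyan-with-constraints statement: one uses the known approximation theorems for conformal minimal immersions (the period-dominating sprays and the Riemann–Hilbert technique of \cite{AlarconForstnericLopez2021}) together with hypothesis (a) — near any point $p=g(x)$ with $x\in bL$, the image lies in a $\delta$-tube around an affine $2$-plane $\Lambda$ through $p$, and such a tube is a convex-like neighbourhood in which one may freely deform the surface while staying inside $\Omega$. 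In the critical (handle) case one additionally routes the arc of the new handle's core through $\Omega$: using hypothesis (b) one first connects the relevant boundary point to a large ball $q+B\subset\Omega$ inside a tube around $\Lambda$, arranges the immersion to map the core curve of the handle into that ball, and then thickens, again invoking the Riemann–Hilbert / period-dominating-spray machinery, now with the free target being the convex ball $q+B$. The interpolation conditions at points of $A$ are carried through because all the approximation theorems cited admit jet interpolation on finite (hence, in the limit, closed discrete) sets.

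For the "furthermore" statement — the case where $K$ itself has piecewise $\Cscr^1$ boundary, $f\in\Cscr^1(K)$ is a conformal minimal immersion with only $f(bK)\subset\Omega$ (not a full neighbourhood) — one first applies a Mergelyan-type approximation in $\Cscr^1(K)$ to replace $f$ by a conformal minimal immersion defined and of class $\Cscr^1$ on a slightly larger smoothly bounded Runge set $K'\supset K$, still mapping the new collar $\overline{K'\setminus K}$ into $\Omega$ (possible since $f(bK)\subset\Omega$ is an open condition preserved under small $\Cscr^1$-perturbations and the $\delta$-tubes of (a) give room), and then feeds $K'$ into the inductive scheme above. The $\Cscr^1(K)$-closeness of $\tilde f$ to $f$ is inherited from the first approximation step plus the geometrically small later corrections, which are uniformly small on $K$ in $\Cscr^1$.

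The main obstacle I expect is the handle (critical) step: one must simultaneously (i) keep the period map surjective so that the extended Weierstrass data integrates to a well-defined immersion on the enlarged domain, (ii) steer the image of the newly created loop entirely through $\Omega$ using only the tube-plus-ball geometry of (a)–(b), and (iii) preserve the high-order interpolation at $A$. Coordinating the period-dominating spray with the Riemann–Hilbert deformation so that all three hold at once, uniformly enough to sum the errors, is the delicate technical heart; everything else is a fairly standard recursion once that lemma is in place.
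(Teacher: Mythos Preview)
Your overall scheme (Morse exhaustion, noncritical/critical split, summable errors) matches the paper's, and your handling of the ``furthermore'' clause is fine. But you have the roles of hypotheses (a) and (b) reversed, and this is a genuine gap. You describe the noncritical (bump) step as ``essentially Mergelyan-with-constraints'' using only (a): boundary points lie in $\delta$-tubes, so one may ``freely deform while staying inside $\Omega$.'' When you extend a conformal minimal immersion from $K$ across an annulus $L\setminus\mathring K$, however, the image of the new piece can be arbitrarily large; nothing keeps it in any tube. The paper's actual mechanism (Lemma \ref{lem:noncritical}) is: subdivide $bK$ into arcs $\alpha_j$ with $f(\alpha_j)\subset W_j=\{|x'|<\delta_j\}$ (this is where condition (a) is used), attach radial arcs to $bL$ cutting the annulus into discs $D_j$, and then extend over each $D_j$ \emph{keeping the first $n-2$ components $f'$ unchanged} via the Mergelyan theorem with fixed components, Theorem \ref{th:Mergelyan}(b). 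This pins a thin collar $D_j\setminus\Delta_j$ inside $W_j\subset\Omega$. On the remaining interior disc $\Delta_j$ the component $f'$ is uncontrolled, and this is precisely where condition (b) enters: one picks a ball $B\supset f(\Delta_j)$, finds $v=(0',v'')$ with $v+B\subset\Omega$, and translates the last two components by $v''$ on $\Delta_j$ before approximating. So (b) is indispensable in the \emph{noncritical} step; your proposal, using only (a) there, has no way to confine the image of the bulk of $D_j$ to $\Omega$.

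Conversely, the critical (handle) step is easy and does not use (b) at all: one simply extends $f$ as a generalized conformal minimal immersion along the core arc $E$ with $f(E)\subset\Omega$ (Lemma \ref{lem:extend}, needing only connectedness of $\Omega$), and Mergelyan reduces the situation back to the noncritical case. Your ``main obstacle'' paragraph therefore targets the wrong step; the period issue across the handle is absorbed routinely by Theorem \ref{th:Mergelyan}(a), and there is no need to route the arc to a large ball. Finally, the Riemann--Hilbert technique plays no role in this proof --- it appears only in the proof of Theorem \ref{th:pconvex}.
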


\begin{remark}\label{rem:AB}
The conditions in the theorem can equivalently be stated as follows. For every point $p\in \Omega$ there are 
Euclidean coordinates $x=(x',x'')$ on $\R^n=\R^{n-2}\times \R^2$ centred at $p=\{x=0\}$ and 
satisfying the following two conditions:
\begin{enumerate}[\rm (a)]
\item there is a $\delta>0$ such that $\{(x',x'')\in\R^n: |x'|<\delta\} \subset \Omega$, and
\item given a ball $B\subset \R^n$ centred at $x=0$ there is $v=(0',v'')\in\R^n$ such that $v+B\subset \Omega$.
\end{enumerate}
Here, $|x|$ denotes the Euclidean norm of $x\in\R^n$, and any Euclidean coordinates $\tilde x$ on $\R^n$ 
are related to the reference ones by $\tilde x = R(x)=Ox+v$, where $R$ is an element of the affine orthogonal group $AO(n)$ 
generated by the orthogonal group $O(n)$ together with translations. 
\end{remark}

%
%
%
%
We have the following precise result on flexibility of domains with convex complements.

\begin{corollary}\label{cor:convex}
Let $C$ be a proper closed convex subset of $\R^n$ for $n\ge 3$. 
Then, $\Omega=\R^n\setminus C$ is flexible if and only $C$ is not a halfspace or a slab
(a domain between two parallel hyperplanes).		
\end{corollary}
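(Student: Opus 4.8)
The plan is to establish the two implications separately. For ``$\Omega$ flexible $\Rightarrow$ $C$ is neither a halfspace nor a slab'' I argue by contraposition. If $C$ is a halfspace, a slab, or (a degenerate slab, i.e.) a hyperplane, then each connected component of $\Omega=\R^n\setminus C$ is an open halfspace, which after a rigid motion is $\{x_n>0\}$. The last coordinate of any conformal harmonic map $\C\to\{x_n>0\}$ is a positive harmonic function on $\R^2$, hence constant by Liouville's theorem. On the other hand $\Omega$ contains small flat conformal minimal discs (pieces of affine $2$-planes) whose $x_n$-coordinate is non-constant, and no such disc can be approximated uniformly on its closure by conformal minimal immersions $\C\to\Omega$; therefore $\Omega$ is not flexible. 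This is precisely the obstruction recorded after Proposition~\ref{prop:flex}.

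For the converse, suppose $C$ is a nonempty proper closed convex set that is not a halfspace, slab or hyperplane; I would deduce flexibility of $\Omega=\R^n\setminus C$ from Theorem~\ref{th:main1} by verifying conditions (a) and (b). Condition (a) holds for \emph{every} nonempty proper closed convex $C$: given $p\in\Omega$, let $c_0\in C$ be the point of $C$ nearest to $p$ and set $a=p-c_0\neq 0$. The projection inequality $\langle a,c-c_0\rangle\le 0$ ($c\in C$) and the identity $\langle a,p\rangle=\langle a,c_0\rangle+|a|^2$ show that any affine $2$-plane through $p$ contained in the hyperplane $\{\langle a,\cdot\rangle=\langle a,p\rangle\}$ --- which exists because $n-1\ge 2$ --- has distance at least $|a|>0$ from $C$, hence carries a tube in $\Omega$.

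The heart of the matter is condition (b). Write $C=L\times C'$, where $L$ is the lineality space of $C$ and $C'\subset L^{\perp}$ is a line-free closed convex set. If $\dim L=n-1$ then $C'\subset L^{\perp}\cong\R$ is a point, a half-line or a segment, so $C$ is a hyperplane, a halfspace or a slab --- excluded; hence $m:=\dim L^{\perp}\ge 2$, and in particular $\Omega=\R^k\times(\R^m\setminus C')$ is connected (the complement of a line-free closed convex set in $\R^m$, $m\ge 2$, is connected). Since $C'$ is line-free its recession cone is pointed, so there are a unit vector $a'\in L^{\perp}$ and $\delta'>0$ with $\langle a',v\rangle\ge\delta'|v|$ for all $v$ in that cone; this forces $\alpha':=\inf_{C'}\langle a',\cdot\rangle>-\infty$ and, crucially, $\langle a',y\rangle\to+\infty$ as $|y|\to\infty$ with $y\in C'$. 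Set $V=\{y\in L^{\perp}:\langle a',y\rangle=\alpha'-1\}$. Every point of $V$ has distance at least $1$ from $C'$, and the coercivity of $\langle a',\cdot\rangle$ gives $\dist(y,C')\to\infty$ as $|y|\to\infty$ in $V$: a nearest point of $C'$ staying at bounded distance from $y$ would have to escape to infinity, forcing $\langle a',\cdot\rangle$ on it to blow up. Now I produce the required $2$-plane $\Lambda$: if $\dim L\ge 1$, take a line $\ell\subset V$ (possible since $\dim V=m-1\ge 1$) and a line $\ell_0\subset L$, and put $\Lambda=\ell_0\times\ell$, using $\dist((u,y),C)=\dist(y,C')$; if $\dim L=0$, then $L^{\perp}=\R^n$ with $n\ge 3$, so $\dim V=n-1\ge 2$ and I take $\Lambda$ to be any affine $2$-plane contained in $V$. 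In either case $\Lambda$ is an affine $2$-plane with $\dist(\Lambda,C)\ge 1$ (so it carries a tube in $\Omega$) and $\sup_{q\in\Lambda}\dist(q,C)=\infty$; hence for every ball $B$ centred at $0$ there is $q\in\Lambda$ with $q+B\subset\Omega$. This verifies (b), and Theorem~\ref{th:main1} yields that $\Omega$ is flexible.

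The step I expect to be the main obstacle is condition (b): recognising that the hypothesis ``$C$ is not a halfspace or a slab'' is exactly the condition $\dim L\le n-2$ --- equivalently, that the line-free factor $C'$ lives in a space of dimension at least $2$ --- and then building a $2$-plane that simultaneously stays a definite distance from $C$ and runs to infinity away from it. Pointedness of the recession cone of $C'$ is what makes the latter possible. Minor bookkeeping points are the degenerate slab (a hyperplane), the trivial case $C=\varnothing$, the connectedness of $\Omega$ needed to invoke Theorem~\ref{th:main1}, and the failure of disc approximation for halfspaces.
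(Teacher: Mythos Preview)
Your treatment of the ``only if'' direction and the lineality decomposition $C=L\times C'$ are fine, and your recession-cone analysis of $C'$ is correct. The gap is in how you read condition (b) of Theorem~\ref{th:main1}. As Remark~\ref{rem:AB} makes explicit (and as the proof of Lemma~\ref{lem:noncritical} actually uses it), the two conditions are coupled: \emph{for every} $p\in\Omega$ one needs a single affine $2$-plane $\Lambda\ni p$ that both carries a $\delta$-tube in $\Omega$ \emph{and} contains points $q$ with $q+B\subset\Omega$ for arbitrarily large balls $B$. You verify (a) at each $p$ with the nearest-point hyperplane $H_p$, and (b) with a fixed plane sitting in the slab $V=\{\langle a',\cdot\rangle=\alpha'-1\}$; these are different planes, and the second one does not pass through a general $p$.

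This is not just a matter of rephrasing. Your $H_p$-plane need not satisfy (b): take $C=L\times C'\subset\R^4$ with $L=\R^2$ and $C'=\{(x_1,x_4):x_4\ge x_1^2\}$, and $p=(0,0,0,-1)$. Then $H_p=\{x_4=-1\}$, and the $2$-plane $\Lambda=p+L=\{(0,x_2,x_3,-1)\}\subset H_p$ has $\dist(q,C)\equiv 1$ for all $q\in\Lambda$, so (b) fails for this choice. Conversely, your $V$-plane cannot be translated to pass through an arbitrary $p\in\Omega$ while keeping a tube inside $\Omega$. The paper closes this gap by invoking a convex-geometry fact (cited as \cite[Theorem~1.3.11]{AnderssonPassareSigurdsson2004}): for a line-free closed convex $C'\subset\R^m$ and any $p'\notin C'$ there is a hyperplane $\Sigma\ni p'$ such that every hyperplane through $p'$ sufficiently close to $\Sigma$ also misses $C'$. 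This stability simultaneously yields the tube (condition (a)) and forces $\dist(q',C')\to\infty$ linearly as $q'\to\infty$ along $\Sigma$ (condition (b)), because $C'$ then lies in a proper cone with apex $p'$. A $2$-plane inside $\Sigma$ (when $m\ge 3$), or the product of the line $\Sigma$ with a line in $L$ (when $m=2$), gives the required $\Lambda$ through $p$. Your recession-cone direction $a'$ is the right global ingredient, but you still need this pointwise stable-hyperplane step to tie (a) and (b) together at every $p$.
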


\begin{proof} 
Let $H\subset \R^n$ be an affine subspace of maximal dimension $k\in\{0,1,\ldots,n-1\}$ contained in $C$, 
and set $m=n-k\in \{1,\ldots,n\}$. Then, in Euclidean coordinates on $\R^n$ in which $H=\{0\}^{m}\times\R^k$ 
we have that $C=C'\times \R^k$, where $C'$ is a closed convex set in $\R^{m}$ which does not contain any affine line.
If $k=n-1$ then $m=1$. Since a closed convex set in $\R$ is an interval or a halfline, 
$C$ is a slab or a halfspace, so its complement is not flexible.
Assume now that $k\le n-2$, so $m\ge 2$.
Fix a point $p=(p',p'')\in \Omega = (\R^{m}\setminus C')\times \R^k$ and let us verify that the hypotheses of
Theorem \ref{th:main1} hold. By translation invariance of $C$ in the $\R^k$ direction we may assume that $p''=0$.
Since $C'$ does not contain any affine line, there is a hyperplane $p'\in \Sigma\subset \R^m$ such that any 
hyperplane $\Sigma'\subset\R^m$ through $p'$ and close enough to $\Sigma$ avoids $C'$
(see \cite[Theorem 1.3.11]{AnderssonPassareSigurdsson2004} and \cite[proof of Theorem 6.1]{ForstnericWold2022X}).  
If $m\ge 3$ then any affine 2-plane $\Lambda\subset \Sigma$ with $p'\in \Lambda$ clearly satisfies 
the conditions in Theorem \ref{th:main1}. (Condition (b) holds by placing the centre of the ball 
at any point of $\Lambda$ far enough from $p=(p',0'')$.) 
If $m=2$ then $\Sigma$ is a line, and the product $\Lambda = \Sigma\times L$ with any line 
$L\subset \{0\}^m\times \R^k$ satisfies the desired conditions by placing the centre of the ball 
at any point of $\Sigma$ far enough from $p$. 
\end{proof}

The next observation only holds for $n=3$ as shown by Example \ref{ex:R4}.

\begin{corollary}\label{cor:convex3}
If $C$ is a closed connected set in $\R^3$ whose complement $\Omega=\R^3 \setminus \C$ 
satisfies the conditions in Theorem \ref{th:main1}, then $C$ is convex.
\end{corollary}

\begin{proof}
Condition (a) in Theorem \ref{th:main1} implies that every point in $\Omega$ can be separated from $C$ by a hyperplane. Since $C$ is connected, it lies in one of the halfspaces determined by this hyperplane. This shows that $C$ is an intersection of halfspaces and hence is convex. 
\end{proof}

Note however that there are closed non-convex (disconnected) sets in $\R^3$ whose complement satisfies the 
conditions in Theorem \ref{th:main1}. In particular, any compact set of zero Hausdorff
length is such.

%
%
\begin{remark}\label{rem:flex-hyp}
If $C$ is a closed convex set in $\R^3$ with nonempty interior $\mathring C=C\setminus bC$ 
which is not $\R^3$, a halfspace, or a slab, then $\mathring C$ does not contain any affine 2-plane. 
By \cite[Theorem 1.4]{DrinovecForstneric2021X} it follows that 
$\mathring C$ is hyperbolic (for minimal surfaces). Thus, the boundary $bC$ is a hypersurface 
dividing $\R^3$ into the union of a flexible connected domain $\Omega=\R^3\setminus C$ and a 
hyperbolic domain $\mathring C$. An example is the graph $x_3=f(x_1,x_2)$ of a nonlinear convex 
function $f:\R^2\to\R$.
By considering the family of hypersurfaces $\Sigma_c=\{x_3=c f(x_1,x_2)\}$ for $c\in \R$ we obtain a
family of splittings of $\R^3$ into a flexible and a hyperbolic domain 
such that the character of the two sides gets reversed when $c$ passes the value $c=0$,
at which point we have a pair of halfspaces that are neither  flexible nor hyperbolic.
A similar phenomenon in the complex world, splitting $\C^n$ for $n>1$ by a convex graphing hypersurface 
into an Oka domain and an unbounded Kobayashi hyperbolic domain, was described by 
Forstneri\v c and Wold in \cite{ForstnericWold2022X}.  
\end{remark}

%
%
\begin{example}\label{ex:wedge}
If $\Gamma\subset \R^2$ is an open cone with vertex $(0,0)$ and angle $\phi>\pi$, then the wedge
\begin{equation}\label{eq:wedge}
	W = \bigl\{(x_1,x_2,x_3)\in \R^3: (x_2,x_3)\in \Gamma\bigr\}
\end{equation}
is flexible by Corollary \ref{cor:convex}. 
In this case, the conditions in Theorem \ref{th:main1} can be seen directly as follows. If $\ell_1$ and $\ell_2$ are the lines in $\R^2$ supporting the two sides of the cone $\Gamma$, then at each point $p\in W$ the translate of one of these lines, together with the translate of the $x_1$-axis (the edge of the wedge $W$), span an affine $2$-plane $\Lambda \subset W$ 
satisfying conditions (a) and (b). 

Minimal surfaces in wedges were studied in many papers; see  
\cite{AlarconLopez2012JDG,HildebrandtSauvigny1997,HildebrandtSauvigny1997II,HildebrandtSauvigny1999III,HildebrandtSauvigny1999IV,Lopez2001,LopezMartin2001}, among others. Alarc\'on and L\'opez showed in 
\cite{AlarconLopez2012JDG} that every open Riemann surface, $M$, admits a {\em proper} conformal minimal
immersion in $\R^3$ with the image contained in a wedge $W$ of the form \eqref{eq:wedge}
with $\Gamma\subset \R^2$ an open cone with vertex $(0,0)$ and angle $\phi>\pi$. 
Their construction also gives the approximation statement in 
Theorem \ref{th:main1}. Indeed, the conditions in Theorem \ref{th:main1} 
conceptualize the construction method introduced in \cite{AlarconLopez2012JDG}.
\end{example}

On the other hand, in dimensions $n\ge 4$ there are flexible domains with non-convex complements satisfying 
conditions in Theorem \ref{th:main1}. Here are some simple examples.

%
%
\begin{example}\label{ex:R4}
Let $\Omega$ be the domain in $\R^4$ with coordinates $x=(x_1,x_2,x_3,x_4)$ given by
\[
	x_4 > -a_1 x_1^2-a_2x_2^2+a_3x_3^2
\]
for some constants $a_1\ge 0,\ a_2>0$, and $a_3\in\R$. 
Then, $\Omega$ satisfies the conditions in Theorem \ref{th:main1}.
Indeed, for every $c\in\R$ the slice $\Omega_c=\Omega\cap\{x_3=c\}$ is concave and is
strongly concave in the $x_2$-direction, so it is a union of tubes around affine 2-planes. Given an affine 2-plane 
$\Lambda \subset \Omega_c$ and a ball $0\in B\subset\R^4$, we have $p+B\subset\Omega$
for any point $p=(p_1,p_2,p_3,p_4)\in \Lambda$ with sufficiently big $p_2$ component. 
If $a_3>0$ then $\Omega$ is convex in the $x_3$-direction. 

Another family of examples is wedges $W\subset \R^n$, $n\ge 4$, given by 
\[
	x_4 > -a_2|x_2| + a_3|x_3| \ \ \ \text{for some $a_2>0$ and $a_3\in\R$}.
\]
Every slice $W\cap\{x_3=const\}$ is a concave wedge as in Example \ref{ex:wedge}
whose edge is the $x_1$-axis. Clearly, $W$ satisfies the conditions in Theorem \ref{th:main1}. 
If $a_3>0$ then the wedge $W$ is convex in the $x_3$-direction. 
\end{example}

%
%
\begin{remark}[Flexible domains for branched minimal surfaces] \label{rem:branched}
The definition of flexibility (see Definition \ref{def:flexible}) carries over to the bigger class of
conformal harmonic maps with branch points. The recently introduced hyperbolicity theory for minimal surfaces
also uses this class of maps; see \cite{DrinovecForstneric2021X,ForstnericKalaj2021,Forstneric2022X}.
A nonconstant map in this class has isolated branch points and is conformal at all immersion points
(see \cite[Remark 2.3.7]{AlarconForstnericLopez2021}). The approximation and interpolation techniques
for conformal harmonic immersions, developed in \cite{AlarconForstnericLopez2021}, also hold
for branched conformal harmonic maps with only minor adjustments of proofs. 
In particular, domains $\Omega\subset\R^n$ which are shown in this paper to be flexible for conformal
harmonic immersions are also flexible for branched conformal harmonic maps.
\end{remark}

%
%
\begin{remark}\label{rem:flex2}
The flexibility property in Definition \ref{def:flexible} is inspired by the notion of an 
{\em Oka manifold}  --- a complex manifold $\Omega$ 
having the analogous properties for holomorphic maps $M\to \Omega$ 
from an arbitrary Stein manifold $M$; see \cite[Sect.\ 5.4]{Forstneric2017E}. In the terminology used in Oka theory,
this is the {\em Oka property with approximation and interpolation}. 
The statements of our results are simpler than those in Oka theory since there are no topological
obstructions: an open connected surface $M$ is homotopy equivalent to a wedge of circles, 
so every continuous map from a compact set $K$ in $M$ to a connected domain $\Omega$ 
extends to a continuous map $M\to\Omega$. One can formulate the corresponding parametric flexibility properties, 
in which case topological obstructions may appear.
The topological structure of the space of conformal minimal immersions $M\to\R^n$ from a given open Riemann surface
$M$ was investigated in \cite{ForstnericLarusson2019CAG,AlarconLarusson2017IJM}. Additional constraints appear
for immersions into a given domain $\Omega\subset\R^n$. We do not investigate the parametric case 
in the present paper. 
\end{remark}

Holomorphic curves in $\C^n$ for $n\ge 2$ are also conformal minimal surfaces, possibly with branch points, 
and an analogue of Theorem \ref{th:main1} holds for them, with a similar proof. However, holomorphic curves in $\C^n$
constitute a very small subset of the space of conformal harmonic maps, 
with more available techniques for their construction. It is therefore not surprising that 
conditions in Theorem \ref{th:main1} can be weakened.  In the following result, a holomorphic coordinate system 
on $\C^n$ may be related to a reference system by any biholomorphism of $\C^n$, and balls in condition (b) are 
replaced by balls in hyperplanes. The notion of flexibility is the same as in Definition \ref{def:flexible} 
but pertaining to holomorphic maps from open Riemann surfaces. 
This coincides with the basic Oka property for complex curves; cf.\ \cite[Theorem 5.4.4]{Forstneric2017E}. 

%
%
\begin{theorem}\label{th:main1C}
Assume that $\Omega$ is a connected domain in $\C^n$ for $n\ge 2$ such that for every point $p\in \Omega$ 
there are holomorphic coordinates $z=(z',z_n)$ on $\C^n=\C^{n-1}\times \C$ centred at $p=\{z=0\}$ and 
satisfying the following two conditions:
\begin{enumerate}[\rm (a)]
\item there is a constant $\delta>0$ such that $\{(z',z_n)\in\C^n: |z'|<\delta\} \subset \Omega$, and
\item given $r>0$ there is $v_n\in \C$ such that $\{(z',v_n):|z'|<r\}\subset \Omega$.  
\end{enumerate}
Then, $\Omega$ is flexible for holomorphic maps $M\to\Omega$ from any open Riemann surface $M$. 
Furthermore, if $K$ is a compact Runge set in $M$ with piecewise $\Cscr^1$ boundary
and $f:K\to\C^n$ is a continuous map which is holomorphic on $\mathring K$ 
with $f(bK)\subset \Omega$, then $f$ can be approximated uniformly on $K$ by 
holomorphic maps $\tilde f:M\to\C^n$ satisfying $\tilde f(M\setminus \mathring K)\subset \Omega$.
\end{theorem}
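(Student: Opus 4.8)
The plan is to run the same scheme as in the proof of Theorem \ref{th:main1}, but with the Alarcón--Forstnerič--López calculus of conformal minimal immersions --- period conditions and all --- replaced by classical Runge--Mergelyan theory on open Riemann surfaces together with the gluing of holomorphic maps on Cartan pairs; the latter is available with no cohomological obstruction because the target is the complex vector space $\C^n$ (see \cite[Chapter~5]{Forstneric2017E}). I would prove the second (``Furthermore'') statement first, since it implies flexibility: given $f$ holomorphic on a neighbourhood $U$ of a Runge set $K$ with $f(U)\subset\Omega$, shrink to a smoothly bounded Runge set $K'$ with $K\subset\mathring{K'}\subset K'\subset U$, apply the boundary statement to $K'$ to get $\tilde f:M\to\C^n$ with $\tilde f(M\setminus K')\subset\Omega$ and $\tilde f$ close to $f$ on $K'$, whence $\tilde f(K')\subset\Omega$ because $f(K')$ is compact in $\Omega$; the jet interpolation at the finite set $A\subset\mathring K$ is carried along through the whole construction by the routine device of working modulo the ideal of holomorphic functions vanishing to the prescribed orders at $A$.

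For the boundary statement, exhaust $M=\bigcup_{j}L_j$ by an increasing sequence of compact Runge sets with $L_0=K$, $L_j\subset\mathring{L}_{j+1}$, each $L_{j+1}$ obtained from $L_j$ either by a noncritical enlargement (attaching finitely many small convex bumps along $bL_j$, with no change of topology) or by a single critical enlargement (attaching one handle to pass a critical point of a strictly subharmonic Morse exhaustion function); see \cite{AlarconForstnericLopez2021}. Inductively one constructs holomorphic maps $f_j$ on a neighbourhood of $L_j$ with $f_{j+1}$ uniformly close to $f_j$ on $L_j$, carrying the prescribed jets at $A$, and satisfying $f_j(L_j\setminus\mathring K)\subset\Omega$; since $bL_j\subset L_j\setminus\mathring K$ for $j\ge 1$ and $f_0(bK)\subset\Omega$, at every stage the image of the current boundary lies in $\Omega$. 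With summable errors, $f_j\to\tilde f$ uniformly on compacts in $M$, and $\tilde f$ has all the required properties.

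The heart of the matter is the noncritical step, which amounts to the following local problem: given a small convex bump $C$ attached to $L_j$ along a boundary arc $\gamma$ with $f_j(\gamma)\subset\Omega$, extend $f_j$ to a holomorphic map across $C$ that is close to $f_j$ near $\gamma$ and maps the new boundary $bL_{j+1}\cap C$ into $\Omega$. Using the hypothesis, cover the compact set $f_j(\gamma)$ by finitely many charts as in Remark \ref{rem:AB}, and subdivide $\gamma$ into subarcs, each mapped by $f_j$ into one such chart, so that in the corresponding coordinates $z=(z',z_n)$ the relevant piece of $f_j(\gamma)$ lies in the convex tube $T=\{|z'|<\delta\}\subset\Omega$ of (a). On each subarc I would deform $f_j$ within $T$ so that afterwards $|z'|<\delta$ and the $z_n$-component equals (approximately) the constant $v_n$ furnished by (b); this is precisely the point at which a ball inside a hyperplane, rather than a full ball, suffices, because across a disc one complex coordinate can be held nearly constant. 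The local deformations are patched into a single holomorphic deformation along $\gamma$ by a convex-combination interpolation in the overlapping tubes (the subdivision being fine enough that consecutive tubes overlap substantially), and the result is spliced to $f_j$ away from $\gamma$ by the Cartan-pair lemma. After this preparation the extension across $C$ is immediate: one extends the $z_n$-component as the near-constant $v_n$ and the $z'$-component into the convex disc $\{|z'|<r\}\subset\Omega$ of (b), obtaining a holomorphic map across $C$ with image in $T\cup\{|z'|<r,\,z_n=v_n\}\subset\Omega$, which is then glued to $f_j$ by the Cartan-pair lemma.

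The critical step presents no new difficulty: since $\Omega$ is connected and an open Riemann surface is homotopy equivalent to a wedge of circles, the map extends continuously over the attached handle; a thickening and general-position argument, followed by a Runge approximation, reduces this case to the noncritical one, with the arc mapped into $\Omega$. I expect the one genuinely delicate point to be the noncritical step --- specifically, performing the boundary deformation holomorphically and globally along $\gamma$ when the good direction $z_n$, together with $v_n$, $\delta$ and $r$, vary from point to point, so that the local slides up the tubes fit together into a single holomorphic map without spoiling the approximation on $L_j$. This is exactly where the convexity of the tubes $\{|z'|<\delta\}$ and of the discs $\{|z'|<r\}$ is used essentially, mirroring the role of conditions (a) and (b) in the proof of Theorem \ref{th:main1}.
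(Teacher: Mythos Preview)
Your overall scheme --- exhaustion, noncritical and critical steps, deducing flexibility from the boundary statement --- matches the paper's, and you correctly isolate the key insight: condition (b) asks only for large discs in a hyperplane, which suffices because across each added piece one can hold the $z_n$-component (in the local chart) essentially constant while letting $z'$ range in $\{|z'|<r\}$.

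The paper's execution of the noncritical step is, however, simpler and more direct than your sketch. Rather than bumps and Cartan pairs, it follows the admissible-set decomposition of Lemma~\ref{lem:noncritical}: the annulus $L\setminus\mathring K$ is cut by arcs $\gamma_j$ into discs $D_j$, each associated with a \emph{single} coordinate chart $(z',z_n)$, and these are processed one at a time. The crucial simplification is that on each $D_j$ the first $n-1$ components $f'$ are left untouched: one first extends $f$ holomorphically to $S\cup D_j$ by ordinary Mergelyan on the Runge set $S$, chooses a subdisc $\Delta_j\subset D_j$ on whose complement $|f'|<\delta_j$ already holds, picks $r$ with $|f'|<r$ on $\Delta_j$, finds $v_n$ by (b), and then approximates \emph{only the last component} $f_n$ by a holomorphic function close to $f_n$ on $S$ and to the constant $v_n$ on $\Delta_j$. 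Since $f'$ is never altered, the tube condition $|f'|<\delta_j$ on $D_j\setminus\Delta_j$ is preserved exactly, and on $\Delta_j$ the image lies in $\{|z'|<r,\ |z_n-v_n|<\eta\}\subset\Omega$. This sidesteps entirely the ``delicate point'' you flag: there is no need to interpolate between charts or to glue via Cartan pairs, because each disc carries its own fixed chart and only a scalar Mergelyan approximation is performed. Your route via bumps and Cartan-pair gluing is plausible, but the ``convex-combination interpolation in the overlapping tubes'' would need real work (convex combinations of holomorphic maps expressed in different coordinate systems do not obviously stay in $\Omega$), whereas the paper's one-chart-per-disc, modify-only-$f_n$ device makes the argument essentially a one-line application of Runge--Mergelyan.
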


Theorem \ref{th:main1C} is proved in Section \ref{sec:proof1}. 

We also have the following analogue of Corollary \ref{cor:convex} for holomorphic curves.

%
%
\begin{corollary}\label{cor:convexC}
Let $C$ be a closed convex set in $\C^n$ for $n\ge 2$. 
Then the domain $\Omega=\C^n\setminus C$ is flexible for holomorphic curves if and only if
$C$ is not $\C$-affinely equivalent to a product $C=C'\times \C^{n-1}$, where $C'$ is a closed convex set in $\C$
which is not a point. 
\end{corollary}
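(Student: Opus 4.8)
The plan is to reduce the statement to Theorem \ref{th:main1C} in exactly the same way that Corollary \ref{cor:convex} was reduced to Theorem \ref{th:main1}, but now keeping track of complex-affine rather than Euclidean normal forms. First I would put $C$ in normal form: let $\Sigma\subset\C^n$ be a complex-affine subspace of maximal complex dimension $k$ contained in $C$. After a complex-affine change of coordinates we may write $\Sigma=\{0\}^m\times\C^k$ with $m=n-k$, and then $C=C'\times\C^k$ where $C'\subset\C^m$ is a closed convex set containing no complex-affine line. The key dichotomy is whether $m=1$ or $m\ge 2$.

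The case $m=1$ is the obstruction side. Here $C=C'\times\C^{n-1}$ with $C'\subset\C$ a closed convex set containing no (complex = real two-dimensional) affine line; as long as $C'$ is not a single point this is precisely the excluded form in the statement, and I must show $\Omega=\C^n\setminus C$ is not flexible for holomorphic curves. Fix a holomorphic map $h:\C\to\Omega$ and compose with the projection $\pi:\C^n\to\C$ onto the first coordinate, so $\pi\circ h:\C\to\C\setminus C'$. Since $C'$ is a nonempty closed convex set distinct from a point, $\C\setminus C'$ is either a half-plane, a strip, the complement of a ray, or the complement of a convex set with nonempty interior; in every case it omits a nonempty open set (indeed a half-plane or at least an open disc), hence is Kobayashi hyperbolic, so $\pi\circ h$ is constant by Liouville/Picard. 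Therefore every entire holomorphic curve in $\Omega$ has constant first coordinate, so a map $f$ defined near a Runge compact $K$ whose first coordinate is nonconstant — which certainly exists with $f(bK)\subset\Omega$ when $C'$ is not a point, e.g. the inclusion of a small disc — cannot be approximated on $K$ by entire curves $\tilde f:\C\to\Omega$; this contradicts flexibility (take $M=\C$, $K$ a closed disc). If $C'$ is a single point then $\Omega=\C^n\setminus\{pt\}\times\C^{n-1}$ contains the translates of $\{0\}\times\C^{n-1}$ and one checks directly that conditions (a), (b) of Theorem \ref{th:main1C} hold (any complex line avoiding the removed subspace gives a tube, and balls in a generic hyperplane can be translated off the subspace), so $\Omega$ is flexible; this is consistent with the statement, which excludes only the case where $C'$ is \emph{not} a point.

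The case $m\ge 2$ is the flexibility side, handled exactly as in the proof of Corollary \ref{cor:convex}. Fix $p=(p',p'')\in\Omega=(\C^m\setminus C')\times\C^k$. Since $C'\subset\C^m$ contains no complex-affine line, there is a complex hyperplane $H\ni p'$ in $\C^m$ supporting $C'$, and by the same openness argument used in Corollary \ref{cor:convex} (cf.\ \cite[Theorem 1.3.11]{AnderssonPassareSigurdsson2004}, \cite[proof of Theorem 6.1]{ForstnericWold2022X}) every complex hyperplane through $p'$ sufficiently close to $H$ also avoids $C'$. Pick such a hyperplane $H'$ and take the affine complex line $L=H'\cap(\text{a generic complex }2\text{-plane through }p')$; more simply, choose a complex line $\ell\ni p'$ inside $H'$ — a small enough translate of $\ell$ still lies in $\C^m\setminus C'$, so the cylinder $\{z:\dist(z,\ell)<\delta\}\times\C^k$ lies in $\Omega$, which gives condition (a) for the coordinates adapted to $\ell\times(\{0\}\times\C^k)$. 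For condition (b), a ball in the relevant hyperplane centred at $p$ can be slid along the $\C^k$-direction (which is unbounded in $C=C'\times\C^k$) or, when $C'$ is bounded in some direction, pushed to infinity inside $\C^m\setminus C'$; in all configurations one produces $v_n\in\C$ with $\{(z',v_n):|z'|<r\}\subset\Omega$. Hence Theorem \ref{th:main1C} applies and $\Omega$ is flexible.

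The main obstacle I anticipate is the bookkeeping in the $m\ge 2$ case: verifying condition (b) of Theorem \ref{th:main1C} uniformly over all closed convex $C'\subset\C^m$ with no complex-affine line requires separating the sub-cases according to which directions $C'$ is bounded in, and matching the "generic hyperplane / translate to infinity" argument to the fact that $C=C'\times\C^k$ always contains the full $\C^k$-factor. This is the same subtlety that appears in Corollary \ref{cor:convex}, so I expect to be able to quote that argument almost verbatim, replacing real affine subspaces and the group $AO(n)$ by complex-affine subspaces and the group of complex-affine automorphisms of $\C^n$.
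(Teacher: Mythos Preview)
Your treatment of the case $m=1$ is fine and matches the paper's argument. The gap is in the case $m\ge 2$.

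You assert that ``since $C'\subset\C^m$ contains no complex-affine line, there is a complex hyperplane $H\ni p'$ supporting $C'$, and every complex hyperplane through $p'$ sufficiently close to $H$ also avoids $C'$.'' This openness claim is false. The cited result from \cite{AnderssonPassareSigurdsson2004} is about \emph{real} hyperplanes and uses that the convex set contains no \emph{real} affine line; it does not transfer to complex hyperplanes under the weaker hypothesis that $C'$ contains no complex line. For a concrete counterexample take $m=2$ and $C'=\R^2\subset\C^2$: this closed convex set contains no complex line, and through $p'=(i,0)$ the complex line $\{z_1=i\}$ avoids $C'$, but the nearby complex lines $\{z_1=i+\epsilon z_2\}$ with $\Im\epsilon\ne 0$ all meet $\R^2$. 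So your mechanism for producing the tube in condition (a) of Theorem \ref{th:main1C} does not work as stated. Your argument for condition (b) is also off: sliding ``along the $\C^k$-direction'' cannot help, since $C=C'\times\C^k$ is translation-invariant in precisely that direction, and when $k=0$ there is no such direction at all.

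The paper's proof handles $m\ge 2$ by a further \emph{real} decomposition: one writes $C'=E\times\R^l$ with $\R^l\subset\C^m$ totally real and $E\subset\R^s$ containing no affine \emph{real} line, so that the real-hyperplane openness from \cite{AnderssonPassareSigurdsson2004} genuinely applies. One then observes that each real hyperplane $\Sigma\times\R^l\subset\C^m$ contains a unique complex hyperplane $\Sigma^c$ through $p$, and that as $\Sigma$ varies over the open family of avoiding hyperplanes the $\Sigma^c$ cannot all coincide (their common intersection would lie in the totally real set $\{p\}\times\R^l$). This produces two \emph{distinct} complex hyperplanes $\Sigma_1^c\ne\Sigma_2^c$ through $p$ avoiding $C'$; a line $L\subset\Sigma_2^c$ transverse to $\Sigma_1^c$ then serves as the $z_n$-axis, with condition (a) coming from the real slab around $\Sigma_2\times\R^l$ and condition (b) from moving along $L$ to parallels of $\Sigma_1\times\R^l$ on the side away from $E$. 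This transversality trick is exactly what is missing from your proposal.
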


\begin{proof} 
By the argument in the proof of Corollary \ref{cor:convex} 
there is an integer $k\in \{0,1,\ldots,n-1\}$ such that $C$ is $\C$-affinely equivalent to a domain 
$C'\times \C^k\subset\C^n$, where $C'$ is a closed convex set in $\C^m$ with $m=n-k\ge 1$ which does not contain 
any affine complex line. 

If $m=1$ and $C'$ is a point then $C$ is a complex hyperplane whose complement 
is flexible (and even Oka). If on the other hand $C'$ is not a point then $\C\setminus C'$ is Kobayashi hyperbolic 
by Picard's theorem, and hence $\Omega=\C^n\setminus C = (\C\setminus C')\times \C^{n-1}$ 
fails to be flexible 
since every holomorphic map $\C\to\Omega$ has constant first component. 

Assume now that $m\ge 2$. It suffices to prove that $\C^m\setminus C'$ is flexible.
By the proof of Corollary \ref{cor:convex} we find a splitting 
$\C^m=\R^{2m}=\R^{s}\times \R^l$ where $l+s=2m$, $\R^l$ is a totally real subspace of $\C^m$,
and $C'=E\times \R^l$ for some closed convex set $E\subset \R^s$ which does not contain any affine
real line. Fix a point $p \in \R^s\setminus E$. By \cite[Theorem 1.3.11]{AnderssonPassareSigurdsson2004}
there is an affine real hyperplane $p\in \Sigma_0 \subset \R^s$ such that every 
hyperplane $p\in \Sigma\subset \R^s$ close enough to $\Sigma_0$ avoids $E$. 
Denote by $\Sigma^c\subset\C^m$ the unique affine complex hyperplane
contained in the real hyperplane $\Sigma\times \R^l\subset \C^m$ and passing through $p$. 
Since the intersection of all hyperplanes $\Sigma$ as above equals $p$, the intersection
of the associated complex hyperplanes $\Sigma^c$ is contained in $\{p\}\times \R^l$. Since this subspace 
is totally real, the said intersection is trivial. In particular, there are hyperplanes $\Sigma_1$ and $\Sigma_2$ 
in this family such that $\Sigma^c_1\ne \Sigma^c_2$. Pick an affine complex line $p\in L\subset \Sigma^c_2$ 
which is transverse to $\Sigma^c_1$. It is elementary to verify that $L$ satisfies the conditions in 
Theorem \ref{th:main1C} (where it corresponds to the complex line $z'=0$). By translation invariance of $C'$ in 
the $\R^l$ direction the same argument applies for every point $p\in \C^m\setminus C'$.
\end{proof}

\begin{remark}
The above proof of Corollary \ref{cor:convexC} also shows that a closed convex set in $\C^m$ 
which does not contain an affine complex line is contained in the intersections of $m$ halfspaces determined 
by $\C$-linearly independent vectors (see \cite[Lemma 3]{Drinovec2002MZ} 
and \cite[Proposition 3.5]{BracciSaracco2009}). 

It is shown in \cite{ForstnericWold2022X} that under mild geometric assumptions on a closed unbounded convex set 
in $\C^n$ its complement is an Oka domain, a much stronger property.
\end{remark}

%
%
%
%
We now describe a class of flexible domains for minimal surfaces 
which do not necessarily satisfy the hypotheses of Theorem \ref{th:main1}. 
Recall \cite[Sect.\ 8.1]{AlarconForstnericLopez2021} that a real function $\tau$ 
of class $\Cscr^2$ on a domain $D\subset \R^n$ is said to be 
{\em $p$-plurisubharmonic} for some $p\in \{1,\ldots,n\}$ if at every point $x\in D$ the Hessian $\Hess_\tau(x)$ 
has eigenvalues $\lambda_1\le \lambda_2\le \ldots\le\lambda_n$ satisfying $\lambda_1+\cdots+\lambda_p\ge 0$; 
the function $\tau$ is \emph{strongly $p$-plurisubharmonic} if strong inequality holds at every point $x\in D$. 
This is equivalent to the condition that the restriction of $\tau$ to every minimal $p$-dimensional
submanifold is a (strongly) subharmonic function. A compact set $L\subset \R^n$
is said to be \emph{$p$-convex} in $\R^n$ if and only if there is a $p$-plurisubharmonic exhaustion 
function $\tau:\R^n\to\R_+=[0,+\infty)$ with $L=\tau^{-1}(0)$ such that $\tau$ is strongly $p$-plurisubharmonic 
on $\R^n\setminus L$
(see \cite[Definition 8.1.9 and Proposition 8.1.12]{AlarconForstnericLopez2021}). A set $L$
which is $p$-convex is also $q$-convex for any $p<q\le n$. A $1$-plurisubharmonic function is 
a convex function, and a $1$-convex set is a geometrically convex set.
A $2$-plurisubharmonic function is also called {\em minimal plurisubharmonic}, and 
a $2$-convex set is called {\em minimally convex}.

%
%
%
%
\begin{theorem}\label{th:pconvex}
If $L$ is a compact $p$-convex set in $\R^n$ for $n\ge 3$ and $1\le p \le \max\{2,n-2\}$, then 
the domain $\Omega=\R^n\setminus L$ is flexible (see Definition \ref{def:flexible}).
Furthermore, if $K$ is a compact Runge set with piecewise $\Cscr^1$ boundary
in an open conformal surface $M$ and $f:K\to\R^n$ is a conformal minimal immersion of class $\Cscr^1(K)$
with $f(bK)\subset \Omega$, then $f$ can be approximated in $\Cscr^1(K)$ by proper conformal minimal 
immersions $\tilde f:M\to\R^n$ with $\tilde f(M\setminus K)\subset \Omega$.
\end{theorem}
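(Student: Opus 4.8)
\medskip

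The plan is to exploit the convexity-theoretic machinery from \cite{AlarconForstnericLopez2021} for exhausting $\Omega=\R^n\setminus L$ by sublevel sets of a strongly $p$-plurisubharmonic function, combined with the Riemann--Hilbert type gluing technique for conformal minimal immersions. First I would pick a $p$-plurisubharmonic exhaustion function $\tau:\R^n\to\R_+$ with $L=\tau^{-1}(0)$ that is strongly $p$-plurisubharmonic on $\Omega$, which exists by the definition of $p$-convexity. For the noninterpolation part of the statement (flexibility), the key point is that $\tau|_\Omega$ is a strongly $p$-plurisubharmonic, hence strongly $n$-plurisubharmonic (since $p\le n$), exhaustion of $\Omega$ whose restriction to any conformal minimal surface in $\Omega$ is strongly subharmonic. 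This lets one push a given conformal minimal immersion $f:U\to\Omega$ outward, level by level, staying in $\Omega$: starting from $f(K)\subset \{\tau<c_0\}\cap\Omega$, one enlarges the domain of definition across the (Runge) handles of $M$ and raises the exhaustion value while controlling the sup-norm on $K$ and the $m_k$-jets at the points $a_k$. The interpolation at finitely many (or, by Proposition~\ref{prop:flex}, at a closed discrete set of) points is handled exactly as in \cite[Ch.~3 and Ch.~8]{AlarconForstnericLopez2021}, where the conformal minimal immersions carry a built-in flux parameter and jet interpolation is incorporated into the approximation scheme via the spray construction.

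\medskip

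The heart of the argument — and the reason the hypothesis $1\le p\le\max\{2,n-2\}$ appears — is the \emph{boundary} step with the piecewise $\Cscr^1$ Runge set $K$ and $f\in\Cscr^1(K)$ with $f(bK)\subset\Omega$. Here I would invoke the existence results for proper conformal minimal immersions into $p$-convex domains' complements from \cite[Ch.~8]{AlarconForstnericLopez2021}: the relevant mechanism is that on a compact bordered surface one can deform $f$ near $bK$, keeping it conformal minimal and close to the original in $\Cscr^1(K)$, so as to move $f(bK)$ to a level set $\{\tau=c\}$ with $c$ arbitrarily large, using the Riemann--Hilbert boundary modification for conformal minimal immersions (see \cite[Theorem 3.6.1]{AlarconForstnericLopez2021}) together with the strong $p$-plurisubharmonicity of $\tau$ on $\Omega$. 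Iterating over an exhaustion $c_1<c_2<\cdots\to\infty$ and alternating the "raise the level'' step with a "change of topology'' step that extends $f$ over the (finitely many, at each stage) added handles of $M$ while staying in $\{\tau>c_{j-1}\}\cup K$, one produces in the limit a conformal minimal immersion $\tilde f:M\to\R^n$ with $\tilde f(M\setminus K)\subset\Omega$; properness follows because $\tau\circ\tilde f$ is an exhaustion of $M\setminus K$ by construction, and the $\Cscr^1(K)$-approximation is preserved by choosing the corrections at step $j$ summably small.

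\medskip

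The main obstacle I expect is the range restriction on $p$: the outward-pushing and Riemann--Hilbert steps require that the restriction of $\tau$ to the relevant surfaces be \emph{strongly subharmonic}, which is automatic for conformal minimal \emph{surfaces} (dimension $2$) when $\tau$ is strongly $2$-plurisubharmonic, but for $p$-plurisubharmonicity with larger $p$ one needs the complement of $L$ to still carry enough plurisubharmonicity — and the bound $p\le\max\{2,n-2\}$ is precisely what guarantees, via \cite[Sect.~8.1]{AlarconForstnericLopez2021}, that a $p$-convex compact set has a complement on which one can run the convex-integration/Riemann--Hilbert scheme for $2$-dimensional minimal surfaces. (For $n\ge 4$ the value $p=n-2$ is the interesting one and requires the full strength of the higher-codimension convexity theory; for $n=3$ one only gets $p\le 2$, i.e. minimally convex $L$, recovering the minimal-surface analogue of the classical result on complements of convex, resp. minimally convex, compacta.) So the plan reduces Theorem~\ref{th:pconvex} to (i) the exhaustion-by-$p$-plurisubharmonic-sublevel-sets picture for $\Omega$, and (ii) the already-established approximation, interpolation and Riemann--Hilbert toolkit for conformal minimal immersions; the work is in assembling the recursion so that all three requirements — $\Cscr^1(K)$-closeness, jet interpolation on $A$, and properness of $\tilde f$ on $M\setminus K$ into $\Omega$ — are met simultaneously.
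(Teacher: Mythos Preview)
Your plan shares the paper's two main ingredients (the $p$-plurisubharmonic exhaustion $\tau$ with $L=\tau^{-1}(0)$, and the Riemann--Hilbert boundary modification from \cite[Ch.~8]{AlarconForstnericLopez2021}), but it differs from the paper's proof in a structurally important way and contains one genuine error.

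\textbf{The error.} Your claim that $\tau|_\Omega$ has ``restriction to any conformal minimal surface in $\Omega$ \dots\ strongly subharmonic'' is false when $p>2$. A strongly $p$-plurisubharmonic function restricts to a strongly subharmonic function on $p$-dimensional minimal submanifolds, not on $2$-dimensional ones; for minimal surfaces you would need $2$-plurisubharmonicity, i.e.\ $\lambda_1+\lambda_2>0$, which is a strictly stronger condition than $\lambda_1+\cdots+\lambda_p>0$ for $p\ge 3$. The reason the bound $p\le n-2$ suffices is different: in codimension at least $3$ the Riemann--Hilbert boundary discs can be chosen in $2$-planes along which the trace of $\Hess_\tau$ is positive (there are enough null directions in $\nullq_*\subset\C^n$ to achieve this), not that $\tau$ is subharmonic along the given surface. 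This is exactly the content of the results you cite from \cite[Sect.~8.3--8.4]{AlarconForstnericLopez2021}, but your justification for invoking them is incorrect. Relatedly, $\tau|_\Omega$ is not an exhaustion of $\Omega$ (it tends to $0$, not $+\infty$, as one approaches $bL$), so the first paragraph's ``level by level'' picture needs reformulation.

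\textbf{The structural difference.} You propose to iterate the Riemann--Hilbert step infinitely often, alternating with topology changes, climbing the levels $c_1<c_2<\cdots$ of $\tau$ throughout the construction. The paper instead applies the Riemann--Hilbert technique \emph{once}: starting from a Mergelyan approximation $f_0:M\to\R^n$ and a compact bordered $M_1\supset K$ with $f_0(M_1)\subset\Omega$, it pushes $f_0(bM_1)$ past a single level $\{\tau=c\}$ chosen so that a closed cube $P$ with $L\subset P\subset\{\tau<c\}$ lies below it. After this one step, $f_1$ maps a neighbourhood of $bM_1$ into $\R^n\setminus P$, and the rest of the proof is the \emph{standard} proper map construction from \cite[Theorem 3.10.3]{AlarconForstnericLopez2021} (equivalently, Theorem~\ref{th:main1} applied to the complement of the convex compact $P$), which controls distance to the origin rather than $\tau$ and requires no further $p$-convexity machinery. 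This decoupling buys a much cleaner argument: you avoid tracking cumulative $\tau$-drops across infinitely many Riemann--Hilbert steps and across topology changes, and the delicate $p\le n-2$ technique is confined to a single compact bordered surface where it is already established as a black box. Your iterated scheme is plausible in outline, but making it rigorous would essentially require redoing the induction underlying \cite[Theorem 8.3.1]{AlarconForstnericLopez2021} in the open-surface setting, whereas the paper reduces to results already proved.
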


Theorem \ref{th:pconvex} is proved in Section \ref{sec:proper}. Besides Theorem \ref{th:main1},
we use the Riemann--Hilbert modification technique for minimal surfaces developed in 
the papers \cite{AlarconForstneric2015MA,AlarconDrinovecForstnericLopez2015PLMS,AlarconForstnericLopezMAMS} 
and presented with more details in \cite[Chapter 6]{AlarconForstnericLopez2021}. 

%
%
There are many challenging open problems in this newly emerging area of minimal surface theory. 
The following seem to be among the most interesting ones.

\begin{problem}
Let $\Omega\subset\R^3$ be a connected domain whose boundary $\Sigma=b\Omega$ is a minimal surface. 
Note that every such domain is minimally convex (see \cite[Corollary 8.1.15]{AlarconForstnericLopez2021}).
\begin{enumerate}[\rm (a)] 
\item Does $\Omega$ admit any conformal minimal immersions $\C\to\Omega$ ?
\item If the answer to (a) is affirmative, is $\Omega$ flexible?
\item If the answer to (a) is negative, is $\Omega$ (complete) hyperbolic?
\end{enumerate}
\end{problem}

If $\Omega$ is a halfspace then the answer to (a) is affirmative but the domain is neither flexible nor hyperbolic. 
By \cite[Theorem 1.1]{BessaJorgePessoa2021} and \cite[Corollary 2.3]{Forstneric2022X} 
the answer to problem (a) is negative if the minimal surface $\Sigma=b\Omega$ is nonflat and of bounded 
Gaussian curvature; in such case the domain $\Omega$ does not contain any parabolic minimal surfaces.
The problem seems entirely open if $\Sigma$ has unbounded curvature. Problem (c) is open for all
domains in $\R^3$ whose boundary is a nonflat minimal surface. 

%
%
\section{Preliminaries}\label{sec:preliminaries}
In this section we recall the prerequisites and tools which will be used in the proofs. We refer to the monograph \cite{AlarconForstnericLopez2021} for the details. We shall focus on the orientable case when the source 
surface is a Riemann surface. The corresponding techniques for nonorientable conformal minimal surfaces 
are developed in \cite{AlarconForstnericLopezMAMS}. By using those methods, the proofs that we provide
for the orientable case easily carry over to nonorientable surfaces. 

The complex hypersurface in $\C^n$ for $n\ge 3$, given by
\begin{equation} \label{eq:nullq}
	\nullq =\bigl\{z=(z_1,z_2,\ldots,z_n)\in\C^n: z_1^2+z_2^2+\cdots + z_n^2=0\bigr\},
\end{equation}
is called the {\em null quadric}. An immersion $f=(f_1,\ldots,f_n):M\to\R^n$ from an open Riemann surface 
is conformal minimal (equivalently, conformal harmonic) if and only if the $(1,0)$-differential 
$\di f=(\di f_1,\ldots,\di f_n)$ (the $\C$-linear part of the differential $df$) is holomorphic and satisfies the nullity condition 
$\sum_{i=1}^n (\di f_i)^2=0$; see \cite[Theorem 2.3.1]{AlarconForstnericLopez2021}. 
Equivalently, fixing a nowhere vanishing holomorphic 1-form $\theta$ on $M$
(see \cite[Theorem 5.3.1]{Forstneric2017E}), the map 
\begin{equation}\label{eq:map-h}
	h=2\di f/\theta:M\to\C^n\setminus \{0\}
\end{equation}
is holomorphic and assume values in the punctured null quadric $\nullq_*=\nullq\setminus \{0\}$. 
Conversely, given a holomorphic map $h:M\to\nullq_*$ such that the $\C^n$-valued holomorphic $(1,0)$-form 
$h\theta$ has vanishing real periods on closed curves in $M$,
we get a conformal minimal immersion $f:M\to\R^n$ by the Enneper--Weierstrass formula 
\[
	f(p)=f(p_0)+ \int_{p_0}^p \Re(h\theta) \quad \text{for}\ \ p\in M,
\]
where $p_0\in M$ is a fixed reference point (see \cite[Theorem 2.3.4]{AlarconForstnericLopez2021}).
We may also allow minimal surfaces to have branch points, corresponding to zeros of $\di f$ which form a closed discrete set
in $M$. In such case, a harmonic map $f$ is said to be conformal if it is conformal at all immersion points; 
equivalently, the holomorphic map $h=2\di f/\theta$ in \eqref{eq:map-h} assumes values in $\nullq$. 
Although the results mentioned in the sequel are formulated for immersed minimal surfaces, 
their proofs carry over to minimal surfaces with branch points.

%
%
\begin{definition}[Definition 1.12.9 in \cite{AlarconForstnericLopez2021}]
\label{def:admissible}
Let $M$ be a smooth surface. An {\em admissible set}\index{admissible set}
in $M$ is a compact set of the form $S=K\cup E$, where $K\subsetneq M$ is a 
finite union of pairwise disjoint compact domains with piecewise $\Cscr^1$ boundaries
and $E = S \setminus\mathring  K$ is a union of finitely many pairwise disjoint
smooth Jordan arcs and closed Jordan curves meeting $K$ only at their endpoints
(if at all) such that their intersections with the boundary $bK$ of $K$ are transverse.
\end{definition}

%
%

Denote by $\Ascr^r(S,\C^n)$ the space of maps $S\to \C^n$ of class $\Cscr^r$ 
which are holomorphic in the interior $\mathring S$ of a compact set $S\subset M$.
The following is \cite[Definition 3.1.2]{AlarconForstnericLopez2021}.

\begin{definition} \label{def:GCMI}
Let $S=K\cup E$ be an admissible set in a Riemann surface $M$, 
and let $\theta$ be a nowhere vanishing holomorphic $1$-form on a neighbourhood of $S$ in $M$.
A {\em generalized conformal minimal immersion} $S\to\R^n$ of class $\Cscr^r$, with $n\ge 3$ and $r\ge 1$, 
is a pair $(f,h\theta)$, where $f: S\to \R^n$ is a $\Cscr^r$ map whose restriction to $\mathring S=\mathring K$ 
is a conformal minimal immersion and the map $h\in \Ascr^{r-1}(S,\nullq_*)$ satisfies the following two conditions:
\begin{enumerate}[\rm (a)]
\item $h\theta =2\di f$ holds on $K$, and
\item for every smooth path $\alpha$ in $M$ parameterizing a connected component of
$E=\overline{S\setminus K}$ we have that $\Re(\alpha^*(h\theta))=\alpha^*(df)=d(f\circ \alpha)$.
\end{enumerate}
\end{definition}

With a slight abuse of language we call the map $f$ itself a generalized conformal minimal immersion.
The complex $1$-form $h\theta$ along $E$ gives additional information --- it determines a conformal frame field
containing the tangent vector field to the path $f\circ \alpha$.


%
%
\begin{lemma}\label{lem:extend}
Let $S=K\cup E$ be an admissible set in a Riemann surface $M$, and let $\theta$ be a nowhere vanishing 
holomorphic $1$-form on a neighbourhood of $S$.  
Let $f:S\to\R^n$ for $n\ge 3$ be a continuous map such that $f|_K:K\to\R^n$ is a
conformal minimal immersion of class $\Cscr^r$, $r\ge 1$. Then there is a generalized 
conformal minimal immersion $(\tilde f,h\theta)$ from $S$ to $\R^n$ such that 
\begin{enumerate}[\rm (a)]
\item $\tilde f = f$ on $K$, and 
\item $\tilde f$ approximates $f$ uniformly on $E$ as closely as desired.
\end{enumerate}
\end{lemma}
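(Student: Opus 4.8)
The statement is essentially that a continuous map on an admissible set $S=K\cup E$, conformal minimal on $K$, can be upgraded to a \emph{generalized} conformal minimal immersion, i.e. one equipped with the extra $\nullq_*$-valued datum $h$ along the arcs of $E$, at the cost of an arbitrarily small $\Cscr^0$-perturbation on $E$. The plan is to build the pair $(\tilde f, h\theta)$ arc by arc, leaving $f$ untouched on $K$, and to exploit that the only real constraints are (i) $h$ takes values in the punctured null quadric $\nullq_*$, and (ii) along each arc $\alpha$ the real part $\Re(\alpha^*(h\theta))$ equals the exact form $d(f\circ\alpha)$, while $\tilde f$ on $E$ is then \emph{defined} by integrating this form so that it matches the prescribed boundary values at the endpoints on $K$. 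Since on $K$ we already have $h:=2\di f/\theta$ landing in $\nullq_*$ (as $f|_K$ is a conformal minimal immersion), the task is to extend this $\nullq_*$-valued map continuously over the arcs of $E$ in a way compatible with the tangential derivative information coming from $f$.

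The key steps, in order, are as follows. First, set $h=2\di f/\theta$ on $K$; this is of class $\Cscr^{r-1}$ and maps into $\nullq_*$. Second, treat a single arc $\alpha:[0,1]\to M$ of $E$ with endpoints (at most two) lying on $bK$; write $\beta(t)=(f\circ\alpha)'(t)\in\R^n$, a continuous $\R^n$-valued function, which is the real part of the desired $\alpha^*(h\theta)/dt$. At the endpoints of $\alpha$ that meet $K$, the value of $h$ is already prescribed (from the $K$-side), and by the transversality in Definition \ref{def:admissible} together with the conformality of $f|_K$, the prescribed $h$-value there is a null vector whose real part is a positive multiple of $\beta$ at that endpoint. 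Third — and this is the heart of the matter — one must choose a continuous lift $t\mapsto h(\alpha(t))\in\nullq_*$ with $\Re(h(\alpha(t))\,\theta(\alpha(t))/dt)=\beta(t)$ for all $t$, matching the given values at the endpoints. This is a problem of completing a given "real half" of a null frame to a full null coframe continuously along an interval: given a nonvanishing real vector $\be_1(t)\in\R^n$ (a rescaling of $\beta$, after dividing by the nonvanishing scalar $\theta(\alpha(t))/dt$), find $\be_2(t)\in\R^n$ with $|\be_2|=|\be_1|$, $\be_1\cdot\be_2=0$, so that $\be_1+i\be_2\in\nullq_*$. Existence of such a continuous choice on an interval is elementary (the bundle of admissible $\be_2$'s over the space of nonzero $\be_1$'s has connected, in fact $(n-2)$-sphere, fibers, hence admits sections over a contractible base, and one can interpolate between prescribed endpoint values); one may also invoke the standard construction used in \cite[Section 3.1]{AlarconForstnericLopez2021} for exactly this purpose. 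Fourth, having $h$ on all of $S$, set $\tilde f=f$ on $K$ and on each arc $\alpha$ define $\tilde f$ by integrating $\Re(h\theta)$ from one endpoint; after a small adjustment of $h$ (rescaling $\be_1$ by a positive function with integral constraint) one arranges $\tilde f$ to hit the prescribed endpoint value of $f$ at the other end of $\alpha$, and the uniform smallness on $E$ is gotten because $\beta$ was already the derivative of $f\circ\alpha$, so only a $\Cscr^0$-small correction is needed. Finally, one checks the two conditions of Definition \ref{def:GCMI}: (a) $h\theta=2\di f$ on $K$ holds by construction, and (b) $\Re(\alpha^*(h\theta))=d(f\circ\alpha)$ holds along each component of $E$ by the way $\tilde f$ was integrated.

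The main obstacle is Step three: producing a \emph{continuous} $\nullq_*$-valued extension along each arc whose real part is the prescribed tangential velocity and which matches the already-fixed values at the endpoints on $K$. This is where the geometry of the null quadric enters — one must verify that the fiber of possible imaginary parts (an $(n-2)$-sphere of radius $|\be_1|$ in the orthogonal complement of $\be_1$, which is nonempty and connected precisely because $n\ge 3$) allows a continuous section interpolating the boundary data. Once this frame-completion lemma is in hand, the rest is routine: reparametrization, integration to recover $\tilde f$ on the arcs, and a small rescaling to fix endpoint values, all producing only a $\Cscr^0$-small change on $E$ while leaving $K$ exactly as given. I would cite the analogous construction in \cite[Section 3.1]{AlarconForstnericLopez2021} rather than redo it in full.
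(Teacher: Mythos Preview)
There is a genuine gap. You set $\beta(t)=(f\circ\alpha)'(t)$ and build the entire construction on lifting this nonvanishing real vector field to $\nullq_*$. But the hypothesis only says that $f$ is \emph{continuous} on $S$ and of class $\Cscr^r$ on $K$; along the arcs of $E$ the map $f$ need not be differentiable at all, so $\beta$ need not exist. Even if you first smooth $f|_\alpha$, the derivative of the smoothed curve may vanish, blocking any lift to $\nullq_*$; you would have to approximate $f\circ\alpha$ by a smooth \emph{immersion} matching the correct endpoint values and, at each endpoint $p\in bK$, having one-sided derivative equal to $df_p(\alpha'(0))$ (computed from the $K$-side, where $f$ is $\Cscr^r$) so that the frame matches the already fixed value $h(p)=2\di f(p)/\theta(p)$. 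None of this is addressed, and your justification for uniform closeness (``because $\beta$ was already the derivative of $f\circ\alpha$'') rests precisely on the unavailable differentiability.

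The paper's proof sidesteps this entirely. Rather than tracking the tangential velocity of $f$ along $E$, it fixes a connected open neighbourhood $\Omega\supset f(E)$ in $\R^n$ and invokes \cite[Lemma 3.5.4]{AlarconForstnericLopez2021} to extend $h$ smoothly over the arc with values in $\nullq_*$ so that the integrated curve $\tilde f(t)=f(p)+\Re\int_0^t h\theta$ remains in $\Omega$ and lands at $f(q)$. Uniform approximation is then obtained by subdividing $E$ into short subarcs and matching the values of $f$ at the subdivision points, shrinking $\Omega$ on each piece. No differentiation of $f$ on $E$ is ever used. Your frame-completion idea is essentially what underlies that cited lemma, but there it is applied to a path chosen freely inside $\Omega$, not to the nonexistent derivative $(f\circ\alpha)'$.
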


\begin{proof}
We explain the proof in the case when $E$ is a smooth embedded arc in $M$ with the endpoints $E\cap K = \{p,q\}\in bK$. 
The case when $E$ is attached to $K$ with one endpoint (or not at all) is similar, and the general case
amounts to a finite application of these special cases.

Let $\Omega\subset \R^n$ be a connected open neighbourhood of $f(E)$. 
Choose a smooth uniformizing parameter $t\in [0,1]$ on the arc $E$, with $t=0$ corresponding to $p$ 
and $t=1$ corresponding to $q$. Set $h=2\di f/\theta:K\to \nullq_*$. By \cite[Lemma 3.5.4]{AlarconForstnericLopez2021} 
we can extend $h$ to a smooth path $h:E\to \nullq_*$ 
such that the map $E\to \R^n$ given by $\tilde f(t)= f(p)+\Re \int_0^t h\theta$ satisfies $\tilde f(0)=f(p)$, 
$\tilde f(1)=f(q)$, and $\tilde f(t)\in \Omega$ for all $t\in [0,1]$. Hence, $(\tilde f,h\theta)$ is a generalized conformal minimal 
immersion on $K\cup E$ which agrees with $(f,h\theta)$ on $K$ such that $\tilde f(E)\subset\Omega$.
To get a uniform approximation of $f$ by $\tilde f$, we split $E$ into finitely many short 
subarcs and apply the same argument on each of them, matching the values of $f$ at their endpoints.
\end{proof}

The following is a simplified version of the Mergelyan approximation theorem for conformal minimal surfaces;
see \cite[Theorems 3.6.1]{AlarconForstnericLopez2021} for the first part and 
\cite[Theorems 3.7.1]{AlarconForstnericLopez2021} for the second one. The nonorientable analogues are
given by \cite[Theorems 4.4 and 4.8]{AlarconForstnericLopezMAMS}.
 
%
%
\begin{theorem}
\label{th:Mergelyan}
Assume that $M$ is an open Riemann surface, $S=K\cup E$ is an admissible Runge set in $M$, and
$n\ge 3$ and $r\ge 1$ are integers. Then the following hold.
\begin{enumerate}[\rm (a)]
\item
Every generalized conformal minimal immersion $f:S\to\R^n$ 
of class $\Cscr^r(S)$ can be approximated in $\Cscr^r(S)$ by conformal minimal immersion $\tilde f:M\to \R^n$.
\item 
If $f=(f',f'')$ and $f'=(f_1,\ldots,f_{n-2})$ extends to a harmonic map $M\to\R^{n-2}$ such that 
$\sum_{i=1}^{n-2}(\di f_i)^2$ has no zeros on $bK\cup E$, then the map $f''=(f_{n-1},f_n)$ 
can be approximated in $\Cscr^r(S)$ by harmonic maps $\tilde f'':M\to\R^2$ such that $\tilde f=(f',\tilde f''):M\to\R^n$ 
is a conformal minimal immersion.
\end{enumerate}
In both cases it is possible to choose $\tilde f$ such that it agrees with $f$ to any given order at finitely many
given points in $\mathring S=\mathring K$.
\end{theorem}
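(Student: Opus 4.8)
The plan is to deduce Theorem \ref{th:Mergelyan} directly from the Mergelyan-type results for conformal minimal surfaces in \cite{AlarconForstnericLopez2021} and \cite{AlarconForstnericLopezMAMS}, of which it is a stripped-down special case: we have discarded the auxiliary conclusions on flux, properness, and general position that are not needed here, so nothing remains but to quote those results and verify that their hypotheses hold in the present setting. I would treat the orientable case first; the nonorientable one then follows by passing to the oriented conformal double cover $\iota\colon\wt M\to M$ with its fixed-point-free antiholomorphic deck involution, running everything $\iota$-equivariantly and descending, exactly as in \cite[Theorems 4.4 and 4.8]{AlarconForstnericLopezMAMS}. No topological obstruction intervenes here, because an open surface deformation retracts onto a wedge of circles.

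For part (a), the datum is a generalized conformal minimal immersion $(f,h\theta)$ on the admissible Runge set $S=K\cup E$ in the sense of Definition \ref{def:GCMI}, which is precisely the class of objects to which \cite[Theorem 3.6.1]{AlarconForstnericLopez2021} applies. That theorem first approximates $(f,h\theta)$ in $\Cscr^r(S)$ by an honest conformal minimal immersion on a neighbourhood of $S$---via the Enneper--Weierstrass correspondence \eqref{eq:map-h} together with the cancellation of the periods of $h\theta$ along the arcs of $E$---and then propagates it to all of $M$ by the gluing-and-Runge-approximation scheme over an exhaustion of $M$ by Runge compacts, the jet-interpolation addendum furnishing agreement with $f$ to a prescribed order at the chosen points of $\mathring S$.

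For part (b), the first $n-2$ components are already prescribed globally as a harmonic map $f'=(f_1,\dots,f_{n-2})\colon M\to\R^{n-2}$, and only $f''=(f_{n-1},f_n)$ is to be corrected. Writing the nullity condition in the form
\[
	(\di f_{n-1})^2+(\di f_n)^2 \;=\; -\sum_{i=1}^{n-2}(\di f_i)^2 \;=:\; g ,
\]
the hypothesis that $\sum_{i=1}^{n-2}(\di f_i)^2$ has no zeros on $bK\cup E$ is exactly the non-degeneracy that lets one adjust only the last two components, by the Weierstrass-type factorization $\di f_{n-1}+\mathrm{i}\,\di f_n=\varphi$, $\di f_{n-1}-\mathrm{i}\,\di f_n=g/\varphi$ with $\varphi$ nowhere vanishing, while keeping $h=2\di f/\theta$ valued in $\nullq_*$ there; and it is stable under the $\Cscr^r$-small perturbations used in the construction. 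With this in place I would invoke \cite[Theorem 3.7.1]{AlarconForstnericLopez2021}, which solves the remaining period problem on the one free null direction and produces $\tilde f''$ with the required approximation and jet interpolation.

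The only genuinely non-formal point is the reconciliation of conventions: checking that ``generalized conformal minimal immersion on an admissible Runge set'' denotes the same object here and in the monograph (including the role of the $1$-form $h\theta$ along $E$, where $f$ itself need not be holomorphic), and that in (b) the zero-free locus $bK\cup E$ is the correct one for the factorization of $g$ to survive along the arcs and through the perturbation. I expect this bookkeeping, rather than any new analytic estimate, to be the main obstacle.
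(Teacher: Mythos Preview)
Your proposal is correct and matches the paper exactly: the paper does not prove this theorem but simply cites \cite[Theorems 3.6.1 and 3.7.1]{AlarconForstnericLopez2021} for parts (a) and (b) respectively, and \cite[Theorems 4.4 and 4.8]{AlarconForstnericLopezMAMS} for the nonorientable case, which is precisely what you do. Your additional commentary on why those hypotheses are satisfied is helpful but not required.
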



%
%
\begin{remark}\label{rem:Mergelyan}
Recall that every generalized conformal minimal immersion
on an admissible set $S\subset M$ can be approximated by a {\em full} conformal minimal immersion in a neighbourhood
of $S$; see \cite[Definition 3.1.2 and Proposition 3.3.2]{AlarconForstnericLopez2021}. It follows that the 
condition in part (b) of Theorem \ref{th:Mergelyan}, that $\sum_{i=1}^{n-2}(\di f_i)^2$ has no zeros on $bK\cup E$, is 
generic and can be arranged by a small deformation of $f$. See also the discussion at the beginning 
of \cite[Sect.\ 3.7]{AlarconForstnericLopez2021}.
\end{remark}

%
%
%
%
\section{Proof of Theorems \ref{th:main1} and \ref{th:main1C}}\label{sec:proof1}

Throughout this section we assume that $M$ is an open Riemann surface and 
$\Omega$ is an open connected set in $\R^n$ for $n\ge 3$ satisfying 
the hypotheses of Theorem \ref{th:main1}. We also fix a nowhere vanishing holomorphic $1$-form $\theta$ on $M$.

The main step in the proof of Theorem \ref{th:main1} is given by the following lemma.

%
%
\begin{lemma}\label{lem:noncritical}
Let $M$ be an open Riemann surface, and let $K$ and $L$ be smoothly bounded compact Runge domains in $M$ 
such that $K\subset\mathring L$ and $K$ is a deformation retract of $L$. Assume that $f:K\to \Omega$ is a conformal minimal immersion of class $\Cscr^r(K)$ for some $r\ge 1$. Given a finite set $A\subset\mathring K$ and numbers $\epsilon>0$ and
$k\in\N$, there is a conformal minimal immersion $\tilde f:L\to \Omega$ satisfying the following conditions:
\begin{enumerate}[\rm (i)]
\item $\|\tilde f-f\|_{\Cscr^r(K)}<\epsilon$, and 
\item $\tilde f-f$ vanishes to order $k$ at every point of $A$.
\end{enumerate}
\end{lemma}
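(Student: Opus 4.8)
The plan is to reduce the problem to a one-step "non-critical" extension across a compact domain where $K$ is a deformation retract of $L$, which is precisely the setting handled by the convex-integration / period-dominating spray machinery developed in \cite{AlarconForstnericLopez2021}. First I would fix, for each point $p$ in the image $f(K)$, an affine $2$-plane $\Lambda_p\ni p$ and a tube of radius $\delta_p>0$ around $\Lambda_p$ contained in $\Omega$, as guaranteed by hypothesis (a); by compactness of $f(K)$ I can choose finitely many such tubes $T_1,\dots,T_N$ whose union covers $f(K)$, with a common radius $\delta>0$ and common "thickness of overlap". The point of hypothesis (a) is that each tube $T_j$ is affinely equivalent to $\R^2\times\delta\D^{\,n-2}$, a \emph{convex tube} direction in which the null-quadric geometry is unobstructed: inside such a tube one has complete freedom to deform the two "tube directions" (the $\Lambda_p$-directions) while keeping the complementary $(n-2)$ coordinates confined to a small ball.

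The key steps, in order, would be: (1) Subdivide $\overline{L\setminus \mathring K}$ into finitely many small "bumps" $C_1,\dots,C_m$, as in the standard Mergelyan/bumping scheme (\cite[Sect.\ 1.12, Sect.\ 3.11]{AlarconForstnericLopez2021}), so that attaching each $C_i$ is either a non-critical extension (a convex bump, no change of topology) or a "handle" step; since $K$ is a deformation retract of $L$, \emph{all} steps are non-critical, i.e.\ each $\overline{L\setminus\mathring K}$ decomposes into convex bumps and we never need to add a period condition for a new generator of $H_1$. (2) For a single convex bump $C_i$ attached to the current domain $K_i$, with $f_i:K_i\to\Omega$ already constructed, I would choose the bump small enough that $f_i(C_i)$ lies in one tube $T_j$; then I extend $f_i$ to a generalized conformal minimal immersion on $K_i\cup C_i$ (Lemma \ref{lem:extend}, keeping the image in $T_j\subset\Omega$), and apply the Mergelyan theorem, Theorem \ref{th:Mergelyan}(b), to the splitting adapted to $T_j=\Lambda_j\times(\delta\text{-ball})$: approximate in $\Cscr^r$ by a genuine conformal minimal immersion $f_{i+1}$ on a neighbourhood of $K_i\cup C_i$, close enough that the image still lies in $T_j\subset\Omega$. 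Throughout I carry the jet-interpolation condition (ii) at the points of $A\subset\mathring K$ for free, since all the cited approximation results include interpolation of finite jets at finitely many interior points. (3) Iterate over $i=1,\dots,m$ with a geometric sequence of error tolerances so that the total $\Cscr^r(K)$-error is $<\epsilon$ and the images stay in $\Omega$; the final $f_{m}$, restricted to $L$, is the desired $\tilde f$.

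The main obstacle is the \emph{global period problem combined with the containment constraint}: even though $K$ is a deformation retract of $L$ (so no new homology classes appear), the existing periods of $h\theta=2\partial f$ over a basis of $H_1(K;\Z)$ must be preserved exactly under each bump deformation, and one must do this while keeping the image inside $\Omega$. This is resolved by the period-dominating spray technique: one builds a holomorphic spray $h_w$ of maps into $\nullq_*$ with $h_0=2\partial f_i/\theta$, depending on finitely many parameters $w$, whose period map $w\mapsto (\text{periods of }h_w\theta)$ is a submersion at $w=0$; combined with Mergelyan approximation on the admissible set $K_i\cup C_i$ and the implicit function theorem, one finds a nearby parameter value $w^*$ for which the periods vanish on the \emph{new} arcs and match the old ones, yielding a genuine conformal minimal immersion on $K_{i+1}$. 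The only extra input needed beyond the standard $\R^n$ case is that the spray can be kept $\Cscr^r$-small, hence image-confining: this follows because the tube $T_j$ is convex in the two $\Lambda_j$-directions, so the spray — which perturbs essentially those two components — keeps the image in $T_j$ provided the perturbation is small. Hypothesis (b) is not needed for this lemma; it enters only in the proof of Theorem \ref{th:main1} itself, to run the exhaustion $K\Subset L_1\Subset L_2\Subset\cdots$ with $\bigcup L_i=M$ while pushing the surface off to where large balls fit inside $\Omega$, guaranteeing that the limit immersion $M\to\Omega$ exists (and, in the boundary version, that $\tilde f(M\setminus K)\subset\Omega$).
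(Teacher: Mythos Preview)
There is a genuine gap: your claim that hypothesis (b) is not needed for this lemma is wrong. In the paper's proof, condition (b) is precisely the mechanism that makes the induction step go through, and without it your scheme breaks at step (2).

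The problem is the phrase ``close enough that the image still lies in $T_j\subset\Omega$.'' After you Mergelyan-extend across the two-dimensional piece $C_i$ (in the paper's notation, the disc $D_j$), you have \emph{no control whatsoever} over the image of the interior of that piece. Theorem \ref{th:Mergelyan} guarantees $\Cscr^r$-closeness only on the admissible set $S=K_i\cup(\text{arcs})$, not on the new two-cell being filled in. Concretely, to apply Theorem \ref{th:Mergelyan}(b) you must first extend the transverse components $f'=(f_1,\dots,f_{n-2})$ harmonically to a neighbourhood of $K_i\cup C_i$; that harmonic extension can take arbitrarily large values on the interior of $C_i$, so there is no reason for $|f'|<\delta$ to persist there, and the image need not lie in the tube $T_j$ nor even in $\Omega$. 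Making $C_i$ small in $M$ does not help: smallness in the source says nothing about the size of the harmonic extension in the target, and if you instead choose the bumps adaptively after seeing the extension, you lose any guarantee of reaching $L$ in finitely many steps. Note also that Lemma \ref{lem:extend} only extends $f$ along \emph{arcs}, so it cannot be invoked to produce a generalized conformal minimal immersion on $K_i\cup C_i$ with $C_i$ a two-cell.

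Here is how condition (b) actually enters. One first Mergelyan-extends $f$ to a conformal minimal immersion on $S\cup D_1$; by construction the transverse components satisfy $|f'|<\delta_1$ on a thin collar $D_1\setminus\Delta_1$ near $\alpha_1\cup\gamma_0\cup\gamma_1$, but on the large interior disc $\Delta_1$ the image $f(\Delta_1)$ may sit in some big ball $B$ with no relation to $\Omega$. Condition (b) supplies a vector $v=(0',v'')$ with $v+B\subset\Omega$. One then sets $g''=f''$ on $S$ and $g''=f''+v''$ on $\Delta_1$, and applies Theorem \ref{th:Mergelyan}(b) with $f'$ held fixed to approximate $g''$ by a harmonic $\tilde f''$ on $S_1=S\cup D_1$. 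The resulting map $f_1=(f',\tilde f'')$ lands in the tube $W_1$ on $D_1\setminus\Delta_1$ (because $|f'|<\delta_1$ there, regardless of $\tilde f''$) and near $v+B\subset\Omega$ on $\Delta_1$. Thus condition (b) is exactly what catches the uncontrolled part of the extension and returns it to $\Omega$; it is used in every step of this lemma, not deferred to the exhaustion in Theorem \ref{th:main1}.
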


\begin{proof} 
We follow a part of \cite[proof of Theorem 3.10.3]{AlarconForstnericLopez2021} with suitable modifications.

The assumptions imply that $L\setminus \mathring K$ is a finite union of annuli.
For simplicity of exposition we assume that $L\setminus \mathring K$ is connected;
in the general case we apply the same argument to each component.
By condition (a) in Theorem \ref{th:main1} there are an integer $l\ge 2$ and a family of compact connected
subarcs $\{\alpha_j: j\in \Z_l\}$ of $bK$ satisfying the following conditions.
\begin{enumerate}[\rm ({A}1)]
\item $\alpha_j$ and $\alpha_{j+1}$ have a common endpoint $p_j$ and are otherwise disjoint for $j\in\Z_l$.
\item $\bigcup_{j\in\Z_l} \alpha_j=bK$.
\item For every $j\in \Z_l$ there is a Euclidean coordinate system $x=(x',x'')$ on $\R^n=\R^{n-2}\times\R^2$ 
and a number $\delta_j>0$ such that  
\begin{equation}\label{eq:W_j}
	f(\alpha_j) \subset W_j:=\bigl\{(x',x'')\in\R^n : |x'|<\delta_j\bigr\} \subset\Omega. 
\end{equation}
\end{enumerate}
(The coordinate system $x$ is related to a reference one by an element of the affine orthogonal group $AO(n)$.) 
For each $j\in \Z_l$ we connect the point $p_j\in bK$ to a point $q_j\in bL$ by a smooth embedded arc
$\gamma_j\subset (\mathring L\setminus K)\cup\{p_j,q_j\}$ intersecting $bK$ and $bL$
transversely at $p_j$ and $q_j$, respectively, such that the arcs $\gamma_j$ for $j\in\Z_l$
are pairwise disjoint (see Figure \ref{fig:proper}). Hence,
\begin{equation}\label{eq:S}
	S=K\cup \bigcup_{j\in \Z_l}\gamma_j
\end{equation}
is an admissible subset of $M$ (see Definition \ref{def:admissible}). For each $j\in \Z_l$
we denote by $\beta_j\subset bL$ the arc with the endpoints $q_{j-1}$ and $q_j$ which
does not contain any other point $q_i$ for $i\in \Z_l\setminus\{j-1,j\}$. Note that $\bigcup_{j\in\Z_l}\beta_j=bL$.
Let $D_j$ be the closed disc in $L\setminus\mathring K$ bounded by the
arcs $\alpha_j$, $\beta_j$, $\gamma_{j-1}$, and $\gamma_j$ (see Figure \ref{fig:proper}).
It follows that $L\setminus\mathring K=\bigcup_{j\in\Z_l} D_j$.

\begin{figure}[ht]
    \begin{center}
    \scalebox{0.05}{\includegraphics{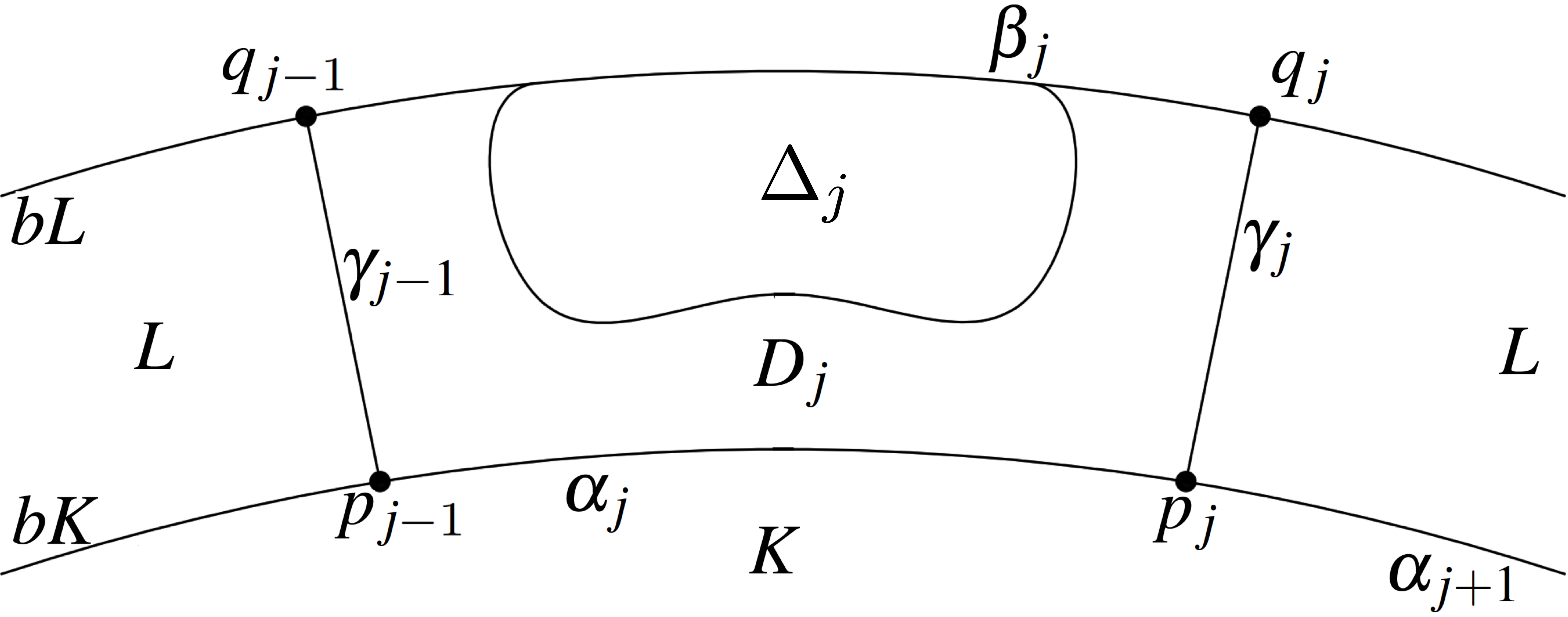}}
        \end{center}
\caption{Sets in the proof of Lemma \ref{lem:noncritical}}
\label{fig:proper}
\end{figure}

By Lemma \ref{lem:extend} we can extend $f$ to a generalized conformal minimal immersion $(f,h\theta)$
on the admissible set $S$ in \eqref{eq:S} such that for each $j\in \Z_l$, and writing $f=(f',f'')$ according to the
coordinate system $x=(x',x'')$ related to $j$, we have that 
\begin{equation}\label{eq:fgammaj}
	f(\gamma_j) \in W_j \cap W_{j+1}. 
\end{equation}
Recall that the sets $W_j$ were defined in \eqref{eq:W_j}. Set $S_0=S$ and consider the admissible sets
\[
	S_j = S\cup \bigcup_{i=1}^j D_i \subset M \quad \text{for}\ j=1,2,\ldots,l.
\]
Clearly, $S_l=L$. Set $f_0=f$. By a finite induction we now construct generalized conformal minimal 
immersions $f_j:S_j\to\Omega$ for $j=1,\ldots,l$ 
such that $f_j$ approximates $f_{j-1}$ in the 
$\Cscr^r$ topology on $S_{j-1}$ for every $j$; the 
map $f_l:S_l=L\to\Omega$ will then satisfy the lemma.

We explain the initial step, constructing $f_1$ from $f_0=f$; the subsequent steps are similar. 
Let $x=(x',x'')$ be a coordinate system on $\R^n$ in which \eqref{eq:W_j} holds for $j=1$, 
and write $f=(f',f'')$ accordingly. By Theorem \ref{th:Mergelyan} 
we can approximate $f$ in $\Cscr^r(S)$ by a conformal minimal immersion on a neighbourhood of $S_1=S\cup D_1$ 
which maps $S$ into $\Omega$ 
and satisfies conditions \eqref{eq:W_j} and \eqref{eq:fgammaj} for $j=1$.
To simplify the notation, assume that $f$ is such. 
Since $|f'|<\delta_1$ on $bD_1\setminus \beta_1 = \alpha_1\cup \gamma_0\cup \gamma_1$, there is 
a disc $\Delta_1\subset D_1$ as shown in Figure \ref{fig:proper} (containing most of the disc $D_1$ except 
a thin neighbourhood of $bD_1\setminus \beta_1$) such that 
\begin{equation}\label{eq:border}
	|f'|<\delta_1\ \ \text{on}\ \ D_1\setminus \Delta_1,
\end{equation}
and hence $f(D_1\setminus \Delta_1) \subset W_1 \subset  \Omega$. 
Note that $S\cup\Delta_1$ is an admissible set which is Runge in $L$. Pick a ball $B\subset \R^n$ centred at $p=\{x=0\}$ 
and containing $f(\Delta_1)$. By condition (b) in Theorem \ref{th:main1} there is a 
vector $v=(0',v'')\in\R^n$ such that $v+B\subset \Omega$. Consider the generalized conformal minimal 
immersion $g=(f',g'')$ on $S\cup\Delta_1$ 
with values in $\Omega$, where 
\[
	g'' = \begin{cases} f'',       & \text{on}\ S; \\
	    			       f''+v'',   & \text{on}\ \Delta_1.
		\end{cases}
\]
By the second part of Theorem \ref{th:Mergelyan} (see also Remark \ref{rem:Mergelyan}) we can approximate $g''$
on $S\cup\Delta_1$ by a harmonic map $\tilde f'':S_1\to\R^2$ such that $f_1:=(f',\tilde f''):S_1\to\R^n$ 
is a conformal  minimal immersion. We claim that $f_1(S_1)\subset \Omega$ provided that the approximations were 
close enough. Since $f(S)\subset\Omega$, we have $f_1(S)\subset \Omega$ if the approximation is close enough on $S$. 
Since the first $n-2$ components of $f_1$ agree with those of $f$,  \eqref{eq:border} ensures 
that $f_1(D_1\setminus\Delta_1)\subset  W_1  \subset \Omega$. Finally, from $f(\Delta_1)\subset B$,
$v+B\subset \Omega$, and $g=(f',f''+v'')$ on $\Delta_1$ we infer that $g(\Delta_1)\subset \Omega$,
and hence $f_1(\Delta_1)\subset \Omega$ provided the approximation of $g''$ by $\tilde f''$ is close enough on $\Delta_1$. 

This completes the first step of the induction. Applying the same argument to $f_1$ on 
$S_1=S\cup D_1$ gives a conformal harmonic immersion $f_2:S_2=S_1\cup D_2\to\Omega$.
In the $l$-th step we get a conformal harmonic immersion $f_l:S_l=L\to\Omega$ approximating $f$ on $K$.
\end{proof}

%
%
\begin{proof}[Proof of Theorem \ref{th:main1}]
We follow \cite[proof of Theorem 3.6.1]{AlarconForstnericLopez2021}, using Lemma \ref{lem:noncritical} 
as the noncritical case in order to ensure that the images of our conformal minimal immersions lie in $\Omega$. 
We explain the main idea and refer to the cited source for further details.

The inductive construction in the cited source gives a  
conformal minimal immersion $\tilde f: M\to\R^n$ as a limit of a sequence of conformal minimal 
immersions $f_i:M_i\to \R^n$ $(i\in\N)$, where the increasing sequence of smoothly bounded compact 
Runge domains $M_1\subset M_2\subset \cdots \subset \bigcup_{i=1}^\infty M_i = M$ 
exhausts $M$, and for every $i\in \{2,3,\ldots\}$ the map $f_i$ approximates $f_{i-1}$ on $M_{i-1}$.
In the case at hand we must pay attention to find $\tilde f$ assuming values in $\Omega$.

Pick a strongly subharmonic Morse exhaustion function $\rho:M\to\R_+$.  
The inductive construction alternately uses the noncritical and the critical case.
The noncritical case amounts to extending (by approximation) a conformal minimal immersion with values in $\Omega$ 
from a sublevel set $K=\{\rho\le c\}$ to a larger sublevel set $L=\{\rho\le c'\}$ with $c'>c$, 
provided that $\rho$ has no critical values in the interval $[c,c']$. This is accomplished by Lemma \ref{lem:noncritical}. 
The critical case amounts to passing a critical point $p$ of $\rho$; the topology of the sublevel set changes at $p$.
We may assume that this is the only critical point on the level set $\{\rho=\rho(p)\}$. This is achieved by extending
a conformal minimal immersion from a sublevel set $K=\{\rho\le c\}$, with $c<\rho(p)$ sufficiently close to $\rho(p)$
such that $\rho$ has no critical values in $[c,\rho(p))$, as a generalized conformal minimal immersion 
across a smooth arc $E\subset M$ attached to $K$ such that the admissible set $S=K\cup E$ is a deformation retract 
of the sublevel set $\{\rho\le c'\}$ for $c'>\rho(p)$ close enough to $\rho(p)$. 
By Lemma \ref{lem:extend}, the extension of $f$ from $K$ to $K\cup E$ can be chosen such that $f(E)\subset \Omega$. 
Together with Theorem \ref{th:Mergelyan} (a) (the Mergelyan approximation theorem) 
this reduces the proof to the noncritical case furnished by Lemma \ref{lem:noncritical}. 
This shows that the domain $\Omega$ is flexible.

Interpolation on a discrete set in $M$ is handled in a similar way, and we refer to 
\cite[proof of Theorem 3.6.1]{AlarconForstnericLopez2021} for the details.
This also gives Proposition \ref{prop:flex}. 

The proof of the last claim, where $f:K\to\R^n$ is a 
conformal minimal immersion satisfying $f(bK)\subset \Omega$, requires minor but 
obvious modifications. The main point is that the proof of Lemma \ref{lem:noncritical}
can be carried out so that the approximating map $\tilde f$ takes $L\setminus \mathring K$ into $\Omega$, 
and the same is true for the extension across an arc required in the critical case. 
\end{proof}

%
%
\begin{proof}[Proof of Theorem \ref{th:main1C}]
Using the same scheme as in the proof of Theorem \ref{th:main1} just given, we need a modification 
in the induction step in the proof of Lemma \ref{lem:noncritical} to accommodate the weaker assumption in
condition (b) in the theorem. 

We shall use the notation in the proof of Lemma \ref{lem:noncritical}; see Figure \ref{fig:proper}. 
We  begin by uniformly approximating the given map $f:K\to \Omega$ in the theorem by 
a holomorphic map from a neighbourhood of the admissible set $S$ \eqref{eq:S} to $\Omega$. 
This is possible by the Bishop--Mergelyan approximation theorem; 
see \cite{Bishop1958PJM} and \cite[Theorems 5 and 6]{FornaessForstnericWold2020}. 
(Analogous arguments apply to holomorphic immersions.) 

Consider now the first step of the induction, whose goal is to construct 
a holomorphic map $f_1:S_1=S\cup D_1\to\Omega$
which approximates the given map $f=f_0$ on $S=S_0$. 
Let $z=(z',z_n)$ be a holomorphic coordinate system 
on $\C^n$ as in the assumption of the theorem such that 
\begin{equation}\label{eq:border1}
	f(\alpha_1\cup \gamma_1\cup \gamma_2) \subset W_1=\{(z',z_n) \in\C^n: |z'|<\delta_1\} \subset\Omega.
\end{equation}	
Write $f=(f',f_n)$ accordingly. By Mergelyan approximation on $S$ we may assume that $f$ 
is holomorphic on (a neighbourhood of) the admissible set $S_1=S\cup D_1$. 
Pick a sufficiently large disc $\Delta_1\subset D_1$ as in Figure \ref{fig:proper} such that 
\begin{equation}\label{eq:border2}
	|f'|<\delta_1\ \ \text{on}\ \ D_1\setminus \Delta_1.
\end{equation}
Choose $r>0$ such that 
\begin{equation}\label{eq:fprime}
	|f'|<r\ \ \text{on}\ \ D_1. 
\end{equation}	
By condition (b) in the theorem there is $v_n\in \C$ such that $\{(z',v_n):|z'|\le r\}\subset \Omega$. 
Hence, for $\eta>0$ small enough we have that 
\begin{equation}\label{eq:quad}
	\{(z',z_n):|z'|\le r,\ |z_n-v_n|\le \eta\} \subset \Omega. 
\end{equation}
Consider the function $g_n$ on $S\cup \Delta_1$ defined by 
\begin{equation}\label{eq:gn}
	g_n = \begin{cases} f_n,        & \text{on}\ S; \\
	    			         v_n,        & \text{on}\ \Delta_1.
		\end{cases}
\end{equation}
By Mergelyan's theorem we can approximate $g_n$ on $S\cup\Delta_1$ by a holomorphic function 
$\tilde f_n$ on a neighbourhood of $S_1=S\cup D_1$. 
Consider the holomorphic map $f_1:=(f',\tilde f_n):S_1\to\C^n$. We claim that $f_1(S_1)\subset \Omega$ if the 
approximations are close enough. Since $f(S)\subset\Omega$, we have that $f_1(S)\subset \Omega$ if the approximation 
is close enough on $S$. Since the first $n-1$ components of the map $f_1$ agree with those of $f$,
condition \eqref{eq:border2} ensures 
that $f_1(D_1\setminus\Delta_1)\subset  W_1  \subset \Omega$, where the latter inclusion 
holds by \eqref{eq:border1}. Finally, from \eqref{eq:fprime}, \eqref{eq:quad}, 
and \eqref{eq:gn} it follows that $f_1(\Delta_1)\subset \Omega$ provided that the approximation 
of $g_n$ by $\tilde f_n$ is close enough on $\Delta_1$. 

This completes the first step of the induction in the proof of Lemma \ref{lem:noncritical}, adjusted to this case.
The subsequent steps are analogous. Using this version of Lemma \ref{lem:noncritical} 
for holomorphic maps, Theorem \ref{th:main1C} is obtained by following the scheme of proof of Theorem \ref{th:main1}.
\end{proof}

%
%
%
%
\section{Proper minimal surfaces in complements of minimally convex sets}\label{sec:proper}

We begin this section by proving Theorem \ref{th:pconvex}. The proof relies on Theorem \ref{th:main1}
and the Riemann--Hilbert modification technique for minimal surfaces, developed in 
the papers \cite{AlarconForstneric2015MA,AlarconDrinovecForstnericLopez2015PLMS,AlarconForstnericLopezMAMS} 
and presented in more detail in \cite[Chapter 6]{AlarconForstnericLopez2021}. 
This technique allows one to push the boundary of a bordered minimal surface to higher levels of a 
$p$-plurisubharmonic exhaustion function for $p$ as in Theorem \ref{th:pconvex}. 
We then present Corollary \ref{cor:pconvexgenv2} which gives proper conformal minimal surfaces in $\R^n$ 
lying in the complement of a compact strongly $p$-convex compact set 
$L\subset \R^n$ for suitable values of $p$ and touching $L$ only at a given point. 

%
%
\begin{proof}[Proof of Theorem \ref{th:pconvex}] 
Let $L$ be a compact $p$-convex set in $\R^n$ for some $n\ge 3$ and $1\le p \le \max\{2,n-2\}$.
Set $\Omega=\R^n\setminus L$, and assume that $f:K\to \Omega$ is a conformal minimal immersion 
from a compact Runge set $K$ with piecewise $\Cscr^1$ boundary in an open conformal surface $M$.
The Mergelyan theorem for minimal surfaces (see Theorem \ref{th:Mergelyan}) gives a conformal minimal 
immersion $f_0:M\to\R^n$ which approximates $f$ as closely as desired in $\Cscr^1(K)$. 
Thus, there is a compact bordered Riemann surface $M_1\subset M$ containing $K$ in its interior 
such that  $K$ is Runge  in $M_1$, $M_1$ is Runge in $M$, and $f_0(M_1)\subset\Omega$.

Since the set $L\subset\R^n$ is $p$-convex, there is a $p$-plurisubharmonic exhaustion function $\tau:\R^n\to\R_+$
with $L=\tau^{-1}(0)$ such that $\tau$ is strongly $p$-plurisubharmonic on $\Omega=\R^n\setminus L$
(see \cite[Proposition 8.1.12]{AlarconForstnericLopez2021}).
Pick a closed cube $P\subset\R^n$ centered at the origin and a number $c>0$ such that 
$L\subset P\subset \{\tau<c\}$. By using the Riemann--Hilbert modification method adapted to 
minimal surfaces,  we can push the boundary of the compact
bordered Riemann surface $f_0:M_1\to\Omega$ to higher levels of $\tau$ while approximating the map $f_0$
as closely as desired on the compact subset $K$ and not dropping the values of $\tau$ 
more than a given amount on $M_1\setminus K$
(see \cite[Lemma 8.4.6, Theorem 8.3.1, Theorem 8.3.11]{AlarconForstnericLopez2021}). 
This gives a conformal minimal immersion $f_1:M_1\to\R^n$ 
which approximates $f$ on $K$ and satisfies 
\[
	f_1(M_1)\subset \Omega\quad \text{and}\quad \text{$\tau(f_1(\zeta))>c$ for all $\zeta\in bM_1$}. 
\]
Therefore, $f_1$ maps a neighbourhood of $bM_1$ to $\R^n\setminus P\subset \Omega$.

The inductive construction in \cite[proof of Theorem 3.10.3]{AlarconForstnericLopez2021} gives a proper 
conformal minimal immersion $\tilde f: M\to\R^n$ as a limit of a sequence of conformal minimal 
immersions $\tilde f_i:M_i\to \R^n$ $(i\in\N)$, where the increasing sequence of smoothly bounded compact 
Runge domains $M_1\subset M_2\subset \cdots \subset \bigcup_{i=1}^\infty M_i = M$ 
exhausts $M$, and for every $i\in \{2,3,\ldots\}$ the map $\tilde f_i$ approximates $\tilde f_{i-1}$ on $M_{i-1}$
and the set $\tilde f_i(M_i\setminus\mathring M_{i-1})$ is not much closer to the origin than  
$\tilde f_{i-1}(bM_{i-1})$. Starting from the map $\tilde f_1=f_1:M_1\to\Omega$ 
we thus obtain a proper conformal minimal immersion $\tilde f: M\to\R^n$ which approximates $f_1$ on $M_1$ 
and maps $M\setminus M_1$ to $\R^n\setminus P\subset \Omega$. 
Hence, $\tilde f (M)\subset \Omega$ and $\tilde f$ 
approximates $f$ on $K$ as closely as desired. 
Interpolation on a finite subset of $K$ is easily built into the construction.
This shows that the domain $\Omega=\R^n\setminus L$ is flexible.

The same construction applies if $f:K\to \R^n$ is a conformal minimal immersion from a compact Runge set $K$ 
with piecewise $\Cscr^1$ boundary in an open conformal surface $M$ such that 
$f(bK)\subset\Omega=\R^n\setminus L$,
and it gives a proper conformal minimal immersion $\tilde f:M\to\R^n$ with $\tilde f (M\setminus K)\subset \Omega$ 
and satisfying the other conditions in the theorem.
\end{proof}

%
%
We now present a corollary to (the proof of) Theorem \ref{th:pconvex} which is analogous to the main result in 
\cite{DrinovecSlapar2020CVEE} for the complex analytic case. (The result of \cite{DrinovecSlapar2020CVEE} also 
follows from \cite[Theorem 15]{ForstnericRitter2014} with $X'$ a point and with jet-interpolation of the map at this point.)
 
Recall that $\D$ denotes the open unit disc in $\C$.
Let $L\subset \R^n$ be a closed smoothly bounded domain. Fix a point $x \in bL$ and let
$\tau$ be a smooth local defining function for $L$ near $x$, i.e., $\tau$ is defined on 
a neighbourhood $U\subset \R^n$ of $x$ and satisfies $L\cap U=\{\tau\le 0\}$ and $d\tau(x)\ne 0$.

\begin{definition}\label{def:touching}
A smooth map $f: \D\to \R^n$ with $f(0)=x\in bL$ touches $L$ to a finite order $k\in\N$ at $x$
if $(\tau\circ f)(z)\ge c|z|^k$ holds for some $c>0$ and for $z\in\D$ near the origin.
\end{definition}

Clearly this implies that $f(r\D)\cap L=f(0)$ for some $r>0$.
The definition does not depend on the choice of the defining function,
and it extends to smooth maps from surfaces.

%
%
\begin{remark}\label{rem:Lojasiewicz}
If $bL$ is real analytic at $x\in bL$ and we choose the local defining function
$\tau$ to be real analytic, the classical {\L}ojasiewicz inequality \cite{Lojasiewicz1965} implies that if 
$f: \D\to \R^n$ is a real analytic map (every harmonic map is such) satisfying $f(0)=x$ 
and $f(z)\notin L$ for $0\ne z\in \D$ close to $0$, then $(\tau\circ f)(z)\ge c|z|^k$ holds 
for some $c>0, k\in\N$ and for $z\in\D$ near the origin. In other words, if $f$ touches $L$ at an isolated 
point then the contact is of finite order.
\end{remark}


A closed smoothly bounded domain $L\subset\R^n$ is said to be \emph{strongly minimally convex} 
if at every point $x\in bL$ the interior principal curvatures $\nu_1\le \nu_2\le \cdots\le \nu_{n-1}$ of $bL$ 
satisfy $\nu_1+\nu_2>0$ (see \cite[Definition 8.1.18.]{AlarconForstnericLopez2021}). 
Such a domain $L$ is not 2-convex at $x$ from the outside \cite[Remark 3.12]{HarveyLawson2013IUMJ}, 
and hence there is an embedded conformal minimal disc in $(\R^n\setminus L)\cup\{x\}$
centred at $x$ and touching $L$ to the second order at $x$ (see \cite[Lemma 3.13]{HarveyLawson2013IUMJ}). 

%
%
\begin{corollary}\label{cor:pconvexgenv2}
Let $L$ be a compact $p$-convex set with smooth boundary in $\R^n$ for $n\ge 3$ and $1\le p \le \max\{2,n-2\}$.
Given an open conformal surface $M$, a compact Runge subset $K\subset M$ with 
piecewise $\Cscr^1$ boundary, a point $\zeta\in \mathring K$, and a conformal minimal immersion
$f: K\to \R^n$ of class $\Cscr^1(K)$ that touches $L$ to a finite order at $f(\zeta)=x\in bL$  
(see Definition \ref{def:touching} and Remark \ref{rem:Lojasiewicz}) 
and satisfies $f(K\setminus\{\zeta\})\cap L  =\varnothing$, there exists a proper conformal minimal immersion
$\tilde f: M\to\R^n$ with $\tilde f(\zeta)=x$ which approximates $f$ as closely as desired in $\Cscr^1(K)$, it agrees with 
$f$ to any given finite order at $\zeta$, and it satisfies $\tilde f(M\setminus\{\zeta\})\cap L  =\varnothing$.

In particular, if $L$ is a compact strongly minimally convex set in $\R^n$ $(n\ge 3)$ then for every point $x\in bL$, 
open conformal surface $M$, and point $\zeta\in M$ there is a proper conformal minimal immersion
$f:M\to\R^n$ such that $f(\zeta)=x$ and $f(M\setminus \{\zeta\})\cap L=\varnothing$.
\end{corollary}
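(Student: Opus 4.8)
The plan is to deduce Corollary \ref{cor:pconvexgenv2} from Theorem \ref{th:pconvex} and the proof technique behind it, by producing a suitable initial conformal minimal immersion to which the pushing-out argument can be applied while fixing the contact point $\zeta$. First I would set $\Omega=\R^n\setminus L$ and observe that the hypotheses give a conformal minimal immersion $f:K\to\R^n$ of class $\Cscr^1(K)$ with $f(\zeta)=x\in bL$, $f(K\setminus\{\zeta\})\cap L=\varnothing$, and finite-order contact at $\zeta$. The key idea is that, because the contact at $\zeta$ is of finite order $k$, a sufficiently small $\Cscr^1(K)$-perturbation of $f$ that agrees with $f$ to order $k$ (or higher) at $\zeta$ will still touch $L$ only at $f(\zeta)=x$ and will keep $f(K\setminus\{\zeta\})$ off $L$ on the rest of $K$; this is the place where the quantitative estimate $(\tau\circ f)(z)\ge c|z|^k$ near $\zeta$ is used, together with compactness of $K$ away from a neighbourhood of $\zeta$.

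Next I would run the inductive construction from the proof of Theorem \ref{th:pconvex}. The only modification needed is that every map in the exhausting sequence $\tilde f_i:M_i\to\R^n$ must agree with $f$ to order $k$ at $\zeta$; this jet-interpolation is available at each stage by Theorem \ref{th:Mergelyan} (Mergelyan with interpolation) and by the interpolation built into Lemma \ref{lem:noncritical}, and it is preserved under the Riemann--Hilbert modifications of \cite[Chapter 6]{AlarconForstnericLopez2021}, which are localized near the boundary and may be taken to fix a prescribed finite jet at an interior point. Thus one first approximates $f$ on $K$ by a conformal minimal immersion $f_0:M\to\R^n$ agreeing with $f$ to order $k$ at $\zeta$, with $f_0(M_1)\subset\R^n$ for some bordered surface $M_1\supset K$; if the approximation is close enough and the contact jet is matched, $f_0$ still touches $L$ only at $x$ and maps $K\setminus\{\zeta\}$ off $L$. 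One then pushes the boundary $bM_1$ past a level set $\{\tau=c\}$ with $L\subset\{\tau<c\}$, obtaining $f_1:M_1\to\R^n$ with $f_1(M_1\setminus\{\zeta\})\cap L=\varnothing$ and $f_1(bM_1)$ outside a fixed cube $P\supset L$, and iterates to obtain a proper $\tilde f:M\to\R^n$ with $\tilde f(\zeta)=x$, $\tilde f(M\setminus\{\zeta\})\cap L=\varnothing$, matching the prescribed jet at $\zeta$, and approximating $f$ on $K$.

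For the final ``in particular'' statement, I would invoke the cited fact (\cite[Remark 3.12]{HarveyLawson2013IUMJ} and \cite[Lemma 3.13]{HarveyLawson2013IUMJ}) that a strongly minimally convex domain $L$ fails to be $2$-convex from the outside at each $x\in bL$, so there is an embedded conformal minimal disc $g:\D\to(\R^n\setminus L)\cup\{x\}$ with $g(0)=x$ touching $L$ to second order at $x$; in particular $g(r\D)\cap L=\{x\}$ for some $r>0$. Choosing a conformal embedding of a small closed disc $K\subset M$ centred at $\zeta$ onto $\overline{r'\D}$ for $r'<r$ and precomposing $g$ with it yields a conformal minimal immersion $K\to\R^n$ satisfying the hypotheses of the first part with $f(\zeta)=x$ and finite (second) order contact, so the first part applies and gives the desired proper $f:M\to\R^n$.

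I expect the main obstacle to be the bookkeeping that guarantees the finite-order contact at $\zeta$ is \emph{not destroyed} by the approximations and the Riemann--Hilbert modifications: one must check that matching the $k$-jet of $f$ at $\zeta$, combined with $\Cscr^1$-closeness on $K$ and the strict positivity of $\tau\circ f$ on $K\setminus\{\zeta\}$, forces every map in the sequence to meet $L$ exactly at $x$ and nowhere else on $K$, and that the modifications used to push the boundary out — which act outside a neighbourhood of $K$ — can be arranged to leave the prescribed jet at $\zeta$ untouched. Everything else is a faithful repetition of the proof of Theorem \ref{th:pconvex}.
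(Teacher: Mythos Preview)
Your proposal is correct and follows essentially the same route as the paper. The only difference is packaging: the paper isolates your ``main obstacle'' into a separate stability lemma (Lemma \ref{lemmasmallperturb}), which says precisely that a close $\Cscr^1(K)$-approximation matching the $k$-jet at $\zeta$ still meets $L$ only at $x$ on a neighbourhood of $\zeta$, and then applies Theorem \ref{th:pconvex} as a black box (it already provides properness, approximation on $K$, jet interpolation at $\zeta$, and $\tilde f(M\setminus K)\subset\R^n\setminus L$), rather than re-running the induction and the Riemann--Hilbert steps as you outline.
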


In the proof we shall need the following lemma, which can be proved similarly as \cite[Lemma 2.2]{DrinovecSlapar2020CVEE}
for the complex analytic case, using the Taylor expansion and Cauchy's estimates for harmonic functions.
For more general results concerning the order of contact of complex curves with hypersurfaces,
see D'Angelo \cite{DAngelo1982AM,DAngelo1993}.

\begin{lemma}\label{lemmasmallperturb}
Let $L\subset \R^n$ be a compact smoothly bounded domain and $f :\D\to\R^n$ be a conformal harmonic map  
touching $L$ to a finite order at $f(0)=x\in bL$ (see Def.\ \ref{def:touching}).
There are $r\in (0,1)$ and an integer $k\ge 1$ such that for any $r'\in (r,1)$ there exists $\epsilon>0$ 
satisfying the following condition: For any conformal harmonic map $g:\D\to\R^n$ 
such that $f-g$ vanishes to order $k$ at $0\in\D$ and satisfies $|f(z)-g(z)|<\epsilon$ for $|z|\le r'$, we have that
$g(r\overline\D\setminus\{0\})\cap L  =\varnothing$.
\end{lemma}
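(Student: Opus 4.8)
The plan is to prove Lemma \ref{lemmasmallperturb} by a quantitative finite-order-contact argument combined with Cauchy estimates for harmonic functions, following the scheme of \cite[Lemma 2.2]{DrinovecSlapar2020CVEE}. First I would fix a smooth local defining function $\tau$ for $L$ near $x=f(0)$, defined on a neighbourhood $U\subset\R^n$ of $x$ with $L\cap U=\{\tau\le 0\}$ and $d\tau(x)\ne 0$. By the hypothesis that $f$ touches $L$ to finite order, there are $c_0>0$, an integer $k\ge 1$, and a radius $\rho\in(0,1)$ such that $(\tau\circ f)(z)\ge c_0|z|^k$ for $|z|\le\rho$. Shrinking $\rho$ if necessary I may also assume $f(\overline{\rho\D})\subset U$. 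I would then split the closed punctured disc $\overline{r\D}\setminus\{0\}$ into two regimes: an annulus $r_1\le|z|\le r$ bounded away from $0$, where $f$ itself maps into the open set $\R^n\setminus L$ (by $f(\overline\D\setminus\{0\})\cap L=\varnothing$, actually using only that $f$ misses $L$ there, so $\tau\circ f\ge\delta_1>0$ on that compact annulus after possibly enlarging $U$ or using that $L$ is compact and $f$ continuous), and a punctured disc $0<|z|\le r_1$ where I must exploit the finite-order contact.

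In the outer annulus $r_1\le|z|\le r$ the argument is soft: $\tau\circ f$ attains a positive minimum $\delta_1$ there, so if $g$ is uniformly $\epsilon$-close to $f$ on $\overline{r'\D}$ with $\epsilon$ small enough (and $r<r'<1$), then $\tau\circ g>0$ there as well, hence $g$ misses $L$ on that annulus. The delicate regime is $0<|z|\le r_1$, where the lower bound $(\tau\circ f)(z)\ge c_0|z|^k$ degenerates at the origin. Here I would use that $f-g$ vanishes to order $k$ at $0$: writing $w=f-g$, the harmonic map $w$ satisfies $w(z)=O(|z|^k)$, and by Cauchy's estimates for harmonic functions on $\overline{r'\D}$ one gets $|w(z)|\le C\|w\|_{L^\infty(\overline{r'\D})}\,|z|^k\le C\epsilon|z|^k$ for $|z|\le r_1$, with $C=C(r_1,r')$. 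Then, by the Lipschitz bound on $\tau$ on the compact set $f(\overline{r_1\D})$ together with $g(\overline{r_1\D})$ staying inside $U$ for $\epsilon$ small, one estimates
\[
(\tau\circ g)(z)\ge(\tau\circ f)(z)-\mathrm{Lip}(\tau)\,|f(z)-g(z)|\ge c_0|z|^k-\mathrm{Lip}(\tau)\,C\epsilon|z|^k=\bigl(c_0-\mathrm{Lip}(\tau)\,C\epsilon\bigr)|z|^k,
\]
which is strictly positive for all $0<|z|\le r_1$ provided $\epsilon<c_0/(\mathrm{Lip}(\tau)\,C)$. Choosing $\epsilon$ smaller than the minimum of the two thresholds from the two regimes gives $(\tau\circ g)(z)>0$, hence $g(z)\notin L$, for all $z\in\overline{r\D}\setminus\{0\}$, which is the assertion.

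The main obstacle is making the vanishing-to-order-$k$ estimate uniform and correctly ordering the choice of constants: the integer $k$ and the inner radius $r$ (equivalently $r_1$) must be selected from $f$ alone, \emph{before} $r'$ and $\epsilon$ are chosen, since the statement fixes $r$ and $k$ first, then allows $r'\in(r,1)$ arbitrary, then produces $\epsilon=\epsilon(r')$. So I must take $k$ from the finite-order contact of $f$, take $r$ so that $\overline{r\D}$ lies well inside the disc where both the contact estimate and the inclusion into $U$ hold, and only afterwards invoke the Cauchy estimate constant $C(r_1,r')$ and the Lipschitz constant of $\tau$ on a fixed compact neighbourhood to fix $\epsilon$. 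A minor technical point is ensuring $g(\overline{r_1\D})\subset U$ so that $\tau\circ g$ is even defined there, which again follows from uniform closeness for $\epsilon$ small. One should also note, as in the paragraph after Definition \ref{def:touching}, that the conclusion and the constants are independent of the particular defining function $\tau$. With these orderings in place the computation is routine, exactly as in \cite{DrinovecSlapar2020CVEE}, the only change being that harmonic (rather than holomorphic) Cauchy estimates are used, which hold with the same qualitative form.
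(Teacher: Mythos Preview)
Your proposal is correct and follows precisely the approach the paper indicates: the paper does not give a detailed proof of this lemma but states that it ``can be proved similarly as \cite[Lemma 2.2]{DrinovecSlapar2020CVEE} for the complex analytic case, using Taylor expansion and Cauchy's estimates for harmonic functions,'' which is exactly your scheme (local defining function, finite-order lower bound $\tau\circ f\ge c_0|z|^k$, harmonic Schwarz-type estimate $|f-g|\le C\epsilon|z|^k$ from vanishing to order $k$ plus the sup-bound on $\overline{r'\D}$, and a Lipschitz comparison). Your attention to the order in which $k,r,r',\epsilon$ are chosen is on point; the only cosmetic slip is the parenthetical ``$f(\overline\D\setminus\{0\})\cap L=\varnothing$,'' which is not assumed, but as you immediately note you only need this on $\overline{r\D}\setminus\{0\}$ for the small $r$ furnished by the contact estimate, so no change is required.
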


%
%
%
\begin{proof}[Proof of Corollary \ref{cor:pconvexgenv2}] 
Assume that $f: K\to \R^n$ satisfies the stated conditions.
By Lemma \ref{lemmasmallperturb} there are an open neighbourhood $V\subset M$ of $\zeta$ and an integer
$k>0$ such that any conformal minimal immersion $g:K\to\R^n$, which agrees with $f$ to order $k$ 
at $\zeta$ and approximates $f$ sufficiently closely on $K$, 
touches $L$ to a finite order at $x$ and $g(V)$ intersects $L$ exactly at $x$.
Since $f(K\setminus V)$ is a compact subset of $\R^n\setminus L$, we also have that
$g(K\setminus V)\subset \R^n\setminus L$ provided that $g$ approximates $f$ 
sufficiently closely on $K$.

By Theorem \ref{th:pconvex} there is a proper conformal minimal immersion $\tilde f: M\to \R^n$ 
with $\tilde f(M\setminus K)\subset \R^n\setminus L$ that approximates $f$ as closely as desired on $K$
and agrees with $f$ to order $k$ at $\zeta$. If the approximation on $K$ is close enough then 
$\tilde f$ has all required properties.

The last claim follows from the observation that if $L$ is strongly minimally convex  then for every point $x\in bL$
there is an embedded conformal minimal disc in $(\R^n\setminus L) \cup\{x\}$
centred at $x$ and touching $L$ to the second order at $x$ (see \cite[Lemma 3.13]{HarveyLawson2013IUMJ}).
\end{proof}

\subsection*{Acknowledgements}
The authors wish to thank Antonio Alarc\'on and Francisco J.\ L\'opez for helpful discussions. 


\end{document}